\theoremstyle{plain}
\newtheorem{thm}{Theorem}[section]
\newtheorem{lem}[thm]{Lemma}
\newtheorem{prop}[thm]{Proposition}
\theoremstyle{definition}
\newtheorem{rem}[thm]{Remark}
\newtheorem{defi}[thm]{Definition}
\numberwithin{thm}{section}
\numberwithin{equation}{section}
\newcommand{\zstroke}{%
	\text{\ooalign{\hidewidth -\kern-.4em-\hidewidth\cr$\mathcal Z$\cr}}%
}
\def\supp{\operatorname{supp}}
\def\esup{\operatornamewithlimits{ess\,sup}}
\def\R{\mathbb R}
\def\Z{\mathbb Z}
\def\z{\mathcal Z}
\def\mp{{\mathfrak M}}
\def\I{(0,\infty)}
\def\W{{\mathcal W}}
\begin{document}

\title{Another approach to weighted inequalities \\ for a superposition of Copson and Hardy operators}

\author[R.Ch. Mustafayev]{RZA MUSTAFAYEV}
\address{RZA MUSTAFAYEV, Department of Mathematics, Kamil \"{O}zda\u{g} Faculty of Science, Karamano\u{g}lu Mehmetbey University, 70200, Karaman, Turkey}
\email{rzamustafayev@gmail.com}

\author[M. Yilmaz]{MERVE YILMAZ}
\address{MERVE YILMAZ, Department of Mathematics, Kamil \"{O}zda\u{g} Faculty of Science, Karamano\u{g}lu Mehmetbey University, 70200, Karaman, Turkey}
\email{mervegorgulu@kmu.edu.tr}

\subjclass[2010]{26D10, 26D15}

\keywords{quasilinear operators, iterated Hardy inequalities, weights}

\begin{abstract}
In this paper, we present a solution to the inequality
$$
\bigg( \int_0^{\infty} \bigg( \int_x^{\infty} \bigg( \int_0^t h \bigg)^q w(t)\,dt
\bigg)^{r / q} u(x)\,ds \bigg)^{1/r}\leq C \, \bigg( \int_0^{\infty} h^p v \bigg)^{1 / p}, \quad h \in {\mathfrak M}^+(0,\infty),
$$
using a combination of reduction techniques and discretization. Here $1 \le p < \infty$, $0 < q ,\, r < \infty$ and $u,\,v,\,w$ are weight functions on $(0,\infty)$.
\end{abstract}

\maketitle

\section{Introduction}\label{in}

Throughout this paper by ${\mathfrak M}^+ (0,\infty)$ we denote the set of all non-negative measurable functions on $(0,\infty)$.
A weight is a function $v \in {\mathfrak M}^+ (0,\infty)$ such that
$$
0 < \int_0^x v(t)\,dt < \infty \quad \mbox{for all} \quad x \in (0,\infty).
$$
The family of all weight functions (also called just weights) on $(0,\infty)$ is given by ${\mathcal W}\I$. In the following, assume that $u,\,v,\,w \in {\mathcal W}\I$.

The investigation of weighted iterated Hardy-type inequalities started with the study of the inequality
\begin{equation}\label{mainn0}
\bigg( \int_0^{\infty} \bigg(\int_0^x \bigg( \int_t^{\infty} h \bigg)^q w(t)\,dt \bigg)^{\frac{r}{q}} u(x)\,dx\bigg)^{\frac{1}{r}} \le C \bigg(\int_0^{\infty} h^p v \bigg)^{\frac{1}{p}}, \qquad h \in \mp^+(0,\infty). 
\end{equation}
Inequality \eqref{mainn0} have been considered in the case $q=1$ in \cite{gop2009} (see also \cite{g1}), where the result was
presented without proof, and in the case $p=1$ in \cite{gjop} and \cite{ss}, where the special
type of weight function $v$ was considered. Recall that the inequality has been completely characterized	in \cite{GMP1} and \cite{GMP2} in the case $0 < q < \infty$, $0 < r \leq \infty$, $1 \le p < \infty$ by using discretization and anti-discretization methods; but, the obtained results were restricted to non-degenerate weights. Another approach to get the characterization of inequality \eqref{mainn0} was presented in \cite{ProkhStep1}. However, this characterization involves auxiliary functions, which make conditions more complicated. 

As it was mentioned in \cite{GogMusIHI} the characterization of "dual" inequality
\begin{equation}\label{main}
\bigg( \int_0^{\infty} \bigg( \int_x^{\infty} \bigg( \int_0^t h \bigg)^q w(t)\,dt
\bigg)^{\frac{r}{q}} u(x)\,dx \bigg)^{\frac{1}{r}}\leq C \, \bigg( \int_0^{\infty} h^p v \bigg)^{\frac{1}{p}}, \qquad h \in {\mathfrak M}^+(0,\infty)
\end{equation}
can be easily obtained from the solutions of inequality \eqref{mainn0}, which was presented in \cite{GKPS}. 

Another pair of "dual" weighted iterated Hardy-type inequalities are
\begin{equation}\label{iterH1}
\bigg( \int_0^{\infty} \bigg(\int_x^{\infty} \bigg( \int_t^{\infty} h \bigg)^q w(t)\,dt \bigg)^{\frac{r}{q}} u(x)\,dx \bigg)^{\frac{1}{r}} \le C \bigg(\int_0^{\infty} h^p v \bigg)^{\frac{1}{p}}, \qquad h \in \mp^+ (0,\infty)
\end{equation}
and 
\begin{equation}\label{iterH3}
\bigg( \int_0^{\infty} \bigg(\int_0^x \bigg( \int_0^t h \bigg)^q w(t)\,dt \bigg)^{\frac{r}{q}} u(x)\,dx \bigg)^{\frac{1}{r}} \le C \bigg(\int_0^{\infty} h^p v \bigg)^{\frac{1}{p}}, \qquad h \in \mp^+(0,\infty).
\end{equation}

The characterization of all four inequalities can be reduced to the characterization of the weighted Hardy-type	inequalities on the cones of non-increasing or non-decreasing functions, when $1 < p< \infty$. This approach provides solution of iterated inequalities by so-called "flipped" conditions (see, \cite{GogMusIHI} and \cite{gog.mus.2017_2}). In the case when $p = 1$, \cite{GogMusIHI} contains solutions of inequalities \eqref{mainn0} - \eqref{iterH3} with weight functions $\int_0^x v$ and $\big( \int_x^{\infty} v\big)^{-1}$ on the right-hand side, as well.

Different approach to solve \eqref{iterH1} has been given in \cite{mus.2017} when $p=1$ using a combination of reduction techniques and discretization. The "classical" conditions ensuring the validity of \eqref{iterH1} was recently presented in \cite{krepick}. Inequalities \eqref{main} and \eqref{iterH3} were recently characterized by using discretization techniques in \cite{GMPTU} and \cite{GU}, respectively.

In the present paper we solve inequality \eqref{main} using a combination of reduction techniques and discretization, which essentially shortens the proof. Similar approach was applied to inequality \eqref{iterH1} in \cite{mus.2017} when $p=1$. Our approach consists of the following steps: We prove that
\begin{align*}
	\bigg( \int_0^{\infty} \bigg( \int_x^{\infty} \bigg( \int_0^t h \bigg)^q w(t)\,dt
	\bigg)^{\frac{r}{q}} u(x)\,dx \bigg)^{\frac{1}{r}} & \\
	& \hspace{-3cm}\thickapprox \bigg\| \bigg\{ 2^{\frac{k}{r}} \bigg(
	\int_{x_k}^{x_{k+1}} \bigg( \int_{x_k}^s h \bigg)^q
	w(s)\,ds\bigg)^{\frac{1}{q}} \bigg\} \bigg\|_{\ell^r(\z)}  \\
	& \hspace{-2.5cm} +  \bigg( \int_0^{\infty}  u(x) \bigg[ \sup_{s \in [x,\infty)} \bigg(
	\int_s^{\infty} w \bigg)^{\frac{1}{q}} \bigg( \int_0^s h\bigg) \bigg]^r \,dx\bigg)^{\frac{1}{r}} 
\end{align*}
with constants independent of $h \in {\mathfrak M}^+(0,\infty)$, where  $\{x_k\}_{k \in \z}$ is a covering sequence mentioned in Definition \ref{rem.disc.} (see Theorem \ref{thm.1.4}). Consequently, inequality \eqref{main} holds if and only if both inequalities
\begin{align}
\bigg\| \bigg\{ 2^{k / r} \bigg(
\int_{x_k}^{x_{k+1}} \bigg( \int_{x_k}^t h \bigg)^q
w(t)\,dt \bigg)^{\frac{1}{q}} \bigg\} \bigg\|_{\ell^r(\z)} & \le C \bigg\| \bigg\{ \bigg( \int_{x_n}^{x_{n+1}} h^p v \bigg)^{\frac{1}{p}} \bigg\} \bigg\|_{\ell^p (\z)}, \quad h \in {\mathfrak M}^+(0,\infty) \label{ineq.discr} \\
\intertext{and}
\bigg( \int_0^{\infty}  u(x) \bigg[ \sup_{s \in [x,\infty)} \bigg(
\int_s^{\infty} w \bigg)^{\frac{1}{q}} \bigg( \int_0^s h\bigg) \bigg]^r \,dx \bigg)^{\frac{1}{r}} & \leq C \, \bigg( \int_0^{\infty} h^p v \bigg)^{\frac{1}{p}}, \quad h \in {\mathfrak M}^+(0,\infty) \label{ineq.supr}
\end{align}
hold (see Theorem \ref{thm.1.5}). Recall that the solution of inequality \eqref{ineq.supr} is known (see Theorem \ref{thm44b+1}). 
Discrete characterization of inequality \eqref{ineq.discr} is presented in Section \ref{s.5} (see Lemma \ref{lem.1.6}). Continuous sufficient conditions for inequality \eqref{ineq.discr} are studied in Section \ref{s.6} (see Lemma \ref{lem.1.8}). In Section \ref{s.7}, we show that these conditions are necessary for inequality \eqref{main} (see Lemma \ref{lem.1.9}). The proof of the main statement is given in Section \ref{s.8}.

We pronounce that the characterizations of inequalities \eqref{mainn0} - \eqref{iterH3} are important because many inequalities
for classical operators  can be reduced to them. These inequalities play an important role in the theory of weighted Morrey-type
spaces and Ces\`{a}ro function spaces (see \cite{GKPS}, \cite{gmu_CMJ}, \cite{gmu_2017} and \cite{gogmusunv}). Note that using characterizations of weighted Hardy inequalities it is easy to obtain the characterization of the boundedness of bilinear Hardy-type inequalities (see, for instance, \cite{Krep} and \cite{BMU}).

For a given weight function $v$, $0 \le x < y \le \infty$ and $1 \le p < \infty$, set
\begin{equation*}
\sigma_p(x,y) : = \left\{\begin{array}{cl}
\left(\int_x^y v(t)^{1-p'}\,dt  \right)^{\frac{1}{p'}} & \qquad \mbox{when} \qquad 1 < p < \infty \\
\esup_{x < t < y} v(t)^{-1} & \qquad \mbox{when} \qquad p = 1.
\end{array}
\right.
\end{equation*}

Our main result, which coincides with \cite[Theorem A]{GMPTU}, reads as follows:
\begin{thm}\label{main1}
	Let $1 \le p < \infty$, $0 < q,\, r < \infty$, $u \in {\mathcal W}(0,\infty) \cap C(0,\infty)$ and  $v,\,w \in {\mathcal W}\I$.
	
	{\rm (a)} Let $p \le \min\{q,\,r\}$. Then inequality \eqref{main} holds if and only if $F_1 < \infty$ and $F_2 < \infty$, where
	\begin{equation*}
	F_1 := \sup_{x \in (0,\infty)} \bigg( \int_0^x u \bigg)^{\frac{1}{r}} \bigg( \int_x^{\infty} w \bigg)^{\frac{1}{q}} \sigma_p(0,x) < \infty,
    \end{equation*}
    and
    \begin{equation*}
    F_2 := \sup_{x \in (0,\infty)} \bigg( \int_x^{\infty} \bigg( \int_t^{\infty} w \bigg)^{\frac{r}{q}} u(t)\,dt \bigg)^{\frac{1}{r}} \sigma_p(0,x) < \infty.
    \end{equation*}
	Moreover, if $C$ is the best constant in \eqref{main}, then $C \approx F_1 + F_2$.
	
	{\rm (b)} Let $r < p \le q$. Then inequality \eqref{main} holds if and only if $F_3 < \infty$ and $F_4 < \infty$, where
	\begin{equation*}
	F_3 := \bigg( \int_0^{\infty} \bigg[\sup_{t \in [x,\infty)} \bigg( \int_t^{\infty} w \bigg)^{\frac{1}{q}} \sigma_p(0,t) \bigg]^{\frac{pr}{p-r}} \bigg( \int_0^x u \bigg)^{\frac{r}{p-r}} u(x) \,dx \bigg)^{\frac{p-r}{pr}} < \infty,
	\end{equation*}
	and
	\begin{equation*}
	F_4 := \bigg(\int_0^{\infty} \bigg(\int_{x}^{\infty} \bigg( \int_y^{\infty} w \bigg)^{\frac{r}{q}} u(y) \, dy\bigg)^{\frac{r}{p-r}} \bigg( \int_x^{\infty} w	\bigg)^{\frac{r}{q}} [\sigma_p(0,x)]^{\frac{pr}{p-r}} \, u(x)\,dx \bigg)^{\frac{pr}{p-r}} < \infty.
	\end{equation*}
	
	Moreover, if $C$ is the best constant in \eqref{main}, then $C \approx F_3 + F_4$.

	{\rm (c)} Let $q < p \le r$. Then inequality \eqref{main} holds if and only if $F_2 < \infty$ and $F_5 < \infty$, where
    \begin{equation*}
    F_5 : = \sup_{t \in (0,\infty)} \bigg( \int_0^t u \bigg)^{\frac{1}{r}} \bigg( \int_t^{\infty} \bigg( \int_x^{\infty} w \bigg)^{\frac{q}{p-q}} w(x) \big[ \sigma_p(0,x) \big]^{\frac{pq}{p-q}} \,dx\bigg)^{\frac{p-q}{pq}} < \infty.
    \end{equation*}

    Moreover, if $C$ is the best constant in \eqref{main}, then $C \approx F_2 + F_5$.	
    
    {\rm (d)} Let $\max\{q,\,r\} < p$. Then inequality \eqref{main} holds if and only if $F_4 < \infty$ and $F_6 < \infty$, where
    \begin{align*}
    F_6 & := \bigg( \int_0^{\infty} \bigg( \int_0^t u \bigg)^{\frac{r}{p-r}} u(t) \bigg( \int_t^{\infty} \bigg( \int_x^{\infty} w \bigg)^{\frac{q}{p-q}} w(x) \big[ \sigma_p(0,x) \big]^{\frac{pq}{p-q}} \,dx\bigg)^{\frac{r(p-q)}{q(p-r)}}\,dt \bigg)^{\frac{p-r}{pr}} < \infty.	
    \end{align*}
    
    Moreover, if $C$ is the best constant in \eqref{main}, then $C \approx F_4 + F_6$.	
\end{thm}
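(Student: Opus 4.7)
The plan is to follow the discretization roadmap advertised in the introduction, which reduces the four-case Theorem \ref{main1} to three already-available ingredients and a bit of bookkeeping. The first move is to apply Theorem \ref{thm.1.4}, which gives the two-sided equivalence
\begin{equation*}
\bigg( \int_0^{\infty} \bigg( \int_x^{\infty} \bigg( \int_0^t h \bigg)^q w(t)\,dt \bigg)^{r/q} u(x)\,dx \bigg)^{1/r} \thickapprox A_1(h) + A_2(h),
\end{equation*}
where $A_1(h)$ is the $\ell^r(\z)$-norm on the right of the displayed formula in the introduction and $A_2(h)$ is the supremum term. By Theorem \ref{thm.1.5} this immediately reduces \eqref{main} to the conjunction of the discrete inequality \eqref{ineq.discr} and the supremum inequality \eqref{ineq.supr}. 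So the whole job becomes: characterize \eqref{ineq.discr} and \eqref{ineq.supr} separately, then show that the union of the resulting conditions reorganizes into the pairs $(F_1,F_2), (F_3,F_4), (F_2,F_5), (F_4,F_6)$ depending on the relation between $p,q,r$.

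For \eqref{ineq.supr} the characterization is quoted from Theorem \ref{thm44b+1}. I would just record the four outcomes of that theorem, one per relation between $p$ and $r$ (and whether $q<p$ or $q\geq p$), which are expressible as suprema or integral conditions involving $\int_0^x u$, $\int_x^\infty w$, and $\sigma_p(0,x)$. These will naturally produce the ``pure supremum'' conditions $F_1$, the mixed $F_3$, the $F_5$, and the integrated $F_6$, depending on the four sub-cases (a)--(d).

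For \eqref{ineq.discr} I would start from Lemma \ref{lem.1.6}, which gives a discrete characterization in terms of sums over the covering sequence $\{x_k\}$ of block-quantities of $w$ and $v$. Then the job is to convert these discrete conditions into the ``continuous'' conditions $F_2$ and $F_4$ using Lemmas \ref{lem.1.8} (sufficiency) and \ref{lem.1.9} (necessity). Here the block-structure of the sequence $\{x_k\}$ --- essentially that $\int_0^{x_{k+1}} u \thickapprox 2\int_0^{x_k} u$ --- is what allows one to pass between discrete sums indexed by $k$ and the continuous integrals $\int_x^\infty (\int_t^\infty w)^{r/q} u(t)\,dt$ appearing in $F_2$ and $F_4$. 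The case split $p\leq q$ vs.\ $q<p$ inside \eqref{ineq.discr} produces a supremum-type condition in the former and an integral condition in the latter; the case split $r\geq p$ vs.\ $r<p$ produces, via duality on $\ell^r(\z)$, either $F_2$ or $F_4$.

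The main obstacle, and the only real work beyond invoking the four lemmas, will be verifying that in each of the four regimes (a)--(d) the pair of conditions produced by \eqref{ineq.supr} and the pair produced by \eqref{ineq.discr} collapse to exactly the two conditions listed in Theorem \ref{main1}. In some regimes one of the four candidate conditions is dominated by another (for instance, $F_1\lesssim F_2$ under $p\leq \min\{q,r\}$ is what one would hope, or vice versa under $\max\{q,r\}<p$), and these comparisons must be made explicitly, typically by integration by parts in the weight $\int_x^\infty w$ against $u$. I expect this independence-of-the-$\{x_k\}$ comparison, together with showing that the best constants are genuinely equivalent to $F_i+F_j$ with no logarithmic gap, to consume the bulk of Section \ref{s.8}, with the rest of the argument being a careful assembly of Theorems \ref{thm.1.4}--\ref{thm.1.5}, Theorem \ref{thm44b+1}, and Lemmas \ref{lem.1.6}, \ref{lem.1.8}, \ref{lem.1.9}.
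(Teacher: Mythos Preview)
Your overall plan is exactly the paper's: split via Theorems \ref{thm.1.4}--\ref{thm.1.5}, characterize \eqref{ineq.supr} by Theorem \ref{thm44b+1}, handle \eqref{ineq.discr} through Lemmas \ref{lem.1.6}, \ref{lem.1.8}, \ref{lem.1.9}, and then collapse redundant conditions. But you have the bookkeeping reversed. Theorem \ref{thm44b+1} (the supremum inequality) yields $D_1,D_2$ when $p\le r$ and $D_3,D_4$ when $r<p$, and these are precisely $F_1,F_2,F_3,F_4$; the $q$ versus $p$ relation plays no role in that characterization. Conversely, the continuous conditions $B_i$ arising from the discrete inequality via Lemmas \ref{lem.1.8}--\ref{lem.1.9} are $B_1=F_1$, $B_2=F_3$, $B_3=F_5$, $B_4=F_6$; so $F_2$ and $F_4$ never come from the discrete side.

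Consequently, the collapses needed are not the ones you anticipate. In cases (a) and (b) nothing has to be absorbed: the discrete condition ($B_1$, resp.\ $B_2$) already coincides with one of the supremum conditions ($D_1$, resp.\ $D_3$), so the triple is already the stated pair. In case (c) the three conditions are $B_3+D_1+D_2=F_5+F_1+F_2$ and one must show $F_1\lesssim F_5$; in case (d) they are $B_4+D_3+D_4=F_6+F_3+F_4$ and one must show $F_3\lesssim F_6$. Both absorptions follow from the single pointwise estimate (valid for $q<p$)
\[
\bigg(\int_t^\infty w\bigg)^{1/q}\sigma_p(0,t)\lesssim \bigg(\int_t^\infty \bigg(\int_y^\infty w\bigg)^{\frac{q}{p-q}}w(y)\,[\sigma_p(0,y)]^{\frac{pq}{p-q}}\,dy\bigg)^{\frac{p-q}{pq}},
\]
obtained by writing $\big(\int_t^\infty w\big)^{p/(p-q)}$ as $\int_t^\infty\big(\int_y^\infty w\big)^{q/(p-q)}w(y)\,dy$ and using monotonicity of $\sigma_p(0,\cdot)$. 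No integration by parts against $u$ is involved, and Section \ref{s.8} is accordingly only a few lines long.
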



\section{Notations and Preliminaries}\label{pre}

Throughout the paper, we always denote by $C$ a positive
constant, which is independent of main parameters but it may vary
from line to line. However a constant with subscript or superscript such as $C_1$
does not change in different occurrences. By $a\lesssim b$,
($b\gtrsim a$) we mean that $a\leq \lambda b$, where $\lambda >0$ depends on
inessential parameters. If $a\lesssim b$ and $b\lesssim a$, we write
$a\approx b$ and say that $a$ and $b$ are  equivalent. Unless a
special remark is made, the differential element $dx$ is omitted
when the integrals under consideration are the Lebesgue integrals.
We put $0 \cdot \infty = 0$, $\infty / \infty =
0$ and $0/0 = 0$.

Let $\emptyset \neq \zstroke \subseteq \overline{\Z} : = \Z \cup \{
-\infty ,+\infty\}$, $0 < q \le +\infty$ and $\{w_k\} = \{w_k\}_{k \in	\zstroke}$ be a sequence of positive numbers. We denote by $\ell^q(\{w_k\},\zstroke)$ the following discrete analogue of a weighted
Lebesgue space: if $0 < q < + \infty$, then
\begin{align*}
&\ell^q(\{w_k\},\zstroke) = \bigg\{ \{a_k\}_{k \in \z}: \|\{a_k\}\|_{\ell^q(\{w_k\},\zstroke)}: = \bigg(\sum_{k \in \zstroke}|a_k w_k|^q\bigg)^{\frac{1}{q}}<+\infty \bigg\},\\
\intertext{and} &
\ell^\infty(\{w_k\},\zstroke) =\left\{ \{a_k\}_{k\in\z}: \|\{a_k\}\|_{\ell^\infty(\{w_k\},\zstroke)} : = \sup_{k\in\z}|a_kw_k|<+\infty \right\}.
\end{align*}
If $w_k=1$ for all $k \in \zstroke$, we write simply $\ell^q(\zstroke)$ instead of $\ell^q(\{w_k\},\zstroke)$. 

The following inequality is a straightforward consequence of the discrete H\"{o}lder inequality:
\begin{equation}\label{eq.1.1}
\| \{a_k b_k\} \|_{\ell^r (\zstroke)} \le \| \{a_k \} \|_{\ell^{\rho}(\zstroke)} \, \| \{b_k \} \|_{\ell^p (\zstroke)}.
\end{equation}

\begin{defi} \label{D:2.1}
	Let $N,M\in \overline{\Z}$, $N<M$. A positive sequence $\{\tau _k\} _{k=N}^M$ is called geometrically increasing if
	there is $\alpha \in (1, +\infty )$ such that
	$$
	\tau _k\geq \alpha \tau _{k-1} \quad \text{for all} \quad k\in
	\{ N+1, \dots ,M\}.
	$$
\end{defi}

Proofs of the following statement can be found in \cite{Le1} and \cite{Le2}.
\begin{lem}\label{L:1.2}
	Let $q\in (0,+\infty]$, $N, M \in \overline{\Z}$, $ N\le M$, $\zstroke = \{N,N+1,\ldots,M-1,M\}$ and let $\{ \tau_k\} _{k=N}^M$ be a	geometrically increasing sequence. Then
	
	\begin{equation*}
	\left\| \left\{\tau _k \sum_{m=k}^{M}a_m\right\}\right\|_{\ell^q(\zstroke)} \approx \| \{\tau _ka_k\}\| _{\ell^q(\zstroke)}
	\end{equation*}
	and
	
	\begin{equation*}
	\bigg\| \bigg\{\tau _k \sup _{k\leq m\leq M}a_m \bigg\}\bigg\|_{\ell^q(\zstroke)} \approx \|\{\tau _ka_k\}\| _{\ell^q(\zstroke)}
	\end{equation*}
	for all non-negative sequences $\{ a_k\} _{k=N}^M$.
\end{lem}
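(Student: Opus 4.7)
The plan is to prove each of the two equivalences in Lemma \ref{L:1.2} by treating the two inequalities separately, starting with the trivial direction. Since $a_k \le \sum_{m=k}^{M} a_m$ and $a_k \le \sup_{k \le m \le M} a_m$ (all terms being non-negative), multiplying by $\tau_k$ and taking the $\ell^q(\zstroke)$ norm immediately yields
$$
\|\{\tau_k a_k\}\|_{\ell^q(\zstroke)} \le \bigg\|\bigg\{\tau_k \sum_{m=k}^{M} a_m\bigg\}\bigg\|_{\ell^q(\zstroke)}, \qquad \|\{\tau_k a_k\}\|_{\ell^q(\zstroke)} \le \bigg\|\bigg\{\tau_k \sup_{k \le m \le M} a_m\bigg\}\bigg\|_{\ell^q(\zstroke)}.
$$
Thus the whole content of the lemma lies in the reverse inequalities.

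For the reverse inequality in the sum version, the key observation is that iterating the defining condition $\tau_k \ge \alpha \tau_{k-1}$ yields $\tau_k \le \alpha^{-(m-k)} \tau_m$ for all $m \ge k$, so
$$
\tau_k \sum_{m=k}^{M} a_m \;\le\; \sum_{m=k}^{M} \alpha^{-(m-k)}\,(\tau_m a_m).
$$
This expresses the left-hand side as a convolution (on $\zstroke$) of the sequence $\{\tau_m a_m\}$ with the geometric kernel $\{\alpha^{-j}\}_{j \ge 0} \in \ell^1$. For $1 \le q \le \infty$ I would apply Minkowski's integral (summation) inequality to get
$$
\bigg\|\bigg\{\sum_{m \ge k} \alpha^{-(m-k)} \tau_m a_m\bigg\}\bigg\|_{\ell^q(\zstroke)} \le \sum_{j \ge 0} \alpha^{-j} \|\{\tau_m a_m\}\|_{\ell^q(\zstroke)} = \frac{\alpha}{\alpha-1}\, \|\{\tau_k a_k\}\|_{\ell^q(\zstroke)}.
$$
The case $0 < q < 1$ will be the main obstacle because Minkowski fails; here I would instead use the elementary inequality $(\sum x_m)^q \le \sum x_m^q$ together with Tonelli to compute
$$
\sum_k \bigg(\tau_k \sum_{m \ge k} a_m\bigg)^q \le \sum_k \sum_{m \ge k} \alpha^{-(m-k)q}(\tau_m a_m)^q = \sum_m (\tau_m a_m)^q \sum_{k \le m} \alpha^{-(m-k)q} = \frac{1}{1-\alpha^{-q}} \sum_m (\tau_m a_m)^q,
$$
which gives exactly the desired bound with constant depending only on $\alpha$ and $q$.

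For the supremum version, no separate argument is needed: for non-negative sequences one has the pointwise estimate
$$
\tau_k \sup_{k \le m \le M} a_m \;\le\; \tau_k \sum_{m=k}^{M} a_m,
$$
so taking the $\ell^q(\zstroke)$ norm and applying the sum equivalence just established yields the upper bound for the sup. The case $q = \infty$ is handled in the same manner, replacing the sum of the geometric kernel by $\sup_{j \ge 0} \alpha^{-j} = 1$. All constants produced depend only on $\alpha$ and $q$, matching the equivalence $\approx$ in the statement.
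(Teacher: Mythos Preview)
The paper does not supply its own proof of this lemma; it merely cites \cite{Le1} and \cite{Le2} (Leindler). Your argument is correct and is in fact the standard one: the growth condition gives $\tau_k \le \alpha^{-(m-k)}\tau_m$ for $m\ge k$, which reduces both the sum and the supremum to convolution with the geometric kernel $\{\alpha^{-j}\}_{j\ge 0}\in\ell^1$, and then Minkowski (for $q\ge 1$) or the subadditivity $(\sum x_m)^q\le\sum x_m^q$ (for $0<q<1$) finishes the job.

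One small slip: your last sentence about $q=\infty$ is not quite right. For the \emph{sum} version the constant at $q=\infty$ is still $\sum_{j\ge 0}\alpha^{-j}=\alpha/(\alpha-1)$, since Minkowski for $\ell^\infty$ reads $\sup_k\sum_j\le\sum_j\sup_k$; there is no reason to replace the sum of the kernel by its supremum. But this is harmless, because $q=\infty$ is already covered by your Minkowski case $1\le q\le\infty$, so no separate treatment is needed.
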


Given two (quasi-) Banach spaces $X$ and $Y$, we write $X
\hookrightarrow Y$ if $X \subset Y$ and if the natural embedding of
$X$ in $Y$ is continuous.

The discrete version of the classical Landau resonance theorems is given in the following proposition. Proofs can be found, for example, in
\cite{gp1}.
\begin{prop}\label{prop.2.1}{\rm(\cite[Proposition 4.1]{gp1})}
	Let $0 < p,\,r < +\infty$, $\emptyset \neq \zstroke \subseteq \overline{\Z}$ and let $\{v_k\}_{k \in \zstroke}$ and $\{w_k\}_{k \in \zstroke}$ be
	two sequences of nonnegative numbers. Assume that
	\begin{equation}\label{eq31-4651}
	\ell^p (\{v_k\},\zstroke) \hookrightarrow \ell^r (\{w_k\},\zstroke).
	\end{equation}
	Then
	\begin{equation*}\label{eq31-46519009}
	\big\|\big\{w_k v_k^{-1}\big\}\big\|_{\ell^\rho(\zstroke)} \le C,
	\end{equation*}
	where $1 / \rho : = ( 1 / r - 1 / p)_+$ \footnote{For any $a\in\R$ denote by $a_+ = a$ when $a>0$ and $a_+ = 0$ when $a \le 0$.} and $C$ stands for the norm of
	embedding \eqref{eq31-4651}.
\end{prop}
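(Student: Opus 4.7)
The plan is to split the argument by the sign of $1/r - 1/p$, since $\rho$ is defined piecewise (either $\rho = \infty$ when $r \ge p$, or $\rho = pr/(p-r)$ when $r < p$). In each case the strategy is the same: plug a carefully chosen test sequence into the embedding \eqref{eq31-4651} and read off the desired estimate. For both regimes I would first dispose of degenerate indices $k \in \zstroke$ with $v_k = 0$: testing the embedding on the one-point sequence $\delta^{(k)}$ (which is $1$ at position $k$ and $0$ elsewhere) gives $w_k = \|\delta^{(k)}\|_{\ell^r(\{w_j\},\zstroke)} \le C \|\delta^{(k)}\|_{\ell^p(\{v_j\},\zstroke)} = C v_k = 0$, so by the convention $0/0 = 0$ the index $k$ contributes $0$ to the norm $\|\{w_k v_k^{-1}\}\|_{\ell^\rho(\zstroke)}$.

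\textbf{Case $r \ge p$ ($\rho = \infty$).} The same one-point test $\delta^{(k)}$ already used for the degenerate indices gives $w_k \le C v_k$ for every $k \in \zstroke$, hence $\|\{w_k v_k^{-1}\}\|_{\ell^\infty(\zstroke)} \le C$ upon taking the supremum. This case is essentially immediate.

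\textbf{Case $r < p$ ($\rho = pr/(p-r)$).} Here I would use an extremal (self-similar) test sequence. Fix a finite subset $F \subseteq \zstroke$ disjoint from the degenerate indices and set
$$
a_k := w_k^{\rho/p}\,v_k^{-1-\rho/p}\quad (k \in F), \qquad a_k := 0 \quad (k \notin F).
$$
The defining relation $1/\rho = 1/r - 1/p$ is equivalent to the single algebraic identity $\rho r/p + r = \rho$ (equivalently, $\rho(p-r) = pr$). Using this, a straightforward computation gives
$$
\|\{a_k\}\|_{\ell^p(\{v_k\},\zstroke)}^p = \sum_{k\in F}\bigg(\frac{w_k}{v_k}\bigg)^\rho \qquad\text{and}\qquad \|\{a_k\}\|_{\ell^r(\{w_k\},\zstroke)}^r = \sum_{k\in F}\bigg(\frac{w_k}{v_k}\bigg)^{\rho r/p + r} = \sum_{k\in F}\bigg(\frac{w_k}{v_k}\bigg)^\rho.
$$
Substituting into \eqref{eq31-4651} and rearranging (which is legal because $1/r - 1/p = 1/\rho > 0$) yields
$$
\bigg(\sum_{k\in F}(w_k/v_k)^\rho\bigg)^{1/\rho} \le C.
$$
Exhausting $\zstroke$ by an increasing family of such finite sets $F$ and passing to the supremum delivers the conclusion.

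\textbf{Main obstacle.} The only nontrivial step is guessing the correct quasi-self-similar extremizer $a_k = w_k^{\rho/p}v_k^{-1-\rho/p}$; once it is in hand, the exponent arithmetic collapses under the single identity $\rho r/p + r = \rho$, and the rest is bookkeeping. Care is needed only with the degenerate indices and with justifying the passage from finite $F$ to all of $\zstroke$, both of which are standard monotone-convergence observations.
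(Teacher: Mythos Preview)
Your proof is correct. The paper does not give its own proof of this proposition: it merely states that ``Proofs can be found, for example, in \cite{gp1}'' and cites \cite[Proposition 4.1]{gp1}. Your argument is the standard Landau resonance approach (test with one-point sequences when $r\ge p$; test with the self-similar sequence $a_k = w_k^{\rho/p}v_k^{-1-\rho/p}$ when $r<p$), and the exponent arithmetic and handling of degenerate indices are carried out cleanly.
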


We will use the following well-known characterizations of weights for which the weighted Hardy-type inequality holds (see, for instance, \cite{ok}).
\begin{thm}\label{thm.Copson}
	Let $1 \le p < \infty$, $0 < q < \infty$ and $v,\,w \in {\mathcal W} (0,\infty)$. 
	
	{\rm (i)} Let $p \le q$. Then 
	$$
	\sup_{f \in \mp^+(0,\infty)} \frac{\bigg( \int_0^{\infty} \bigg( \int_0^x f \bigg)^q w(x)\,dx\bigg)^{\frac{1}{q}}}{\bigg( \int_0^{\infty} f^p v \bigg)^{\frac{1}{p}}} \approx \sup_{x \in (0,\infty)} \bigg( \int_x^{\infty} w \bigg)^{\frac{1}{q}} \sigma_p(0,x).
	$$
	
	{\rm (ii)} Let $q < p$. Then 
	$$
	\sup_{f \in \mp^+(0,\infty)} \frac{\bigg( \int_0^{\infty} \bigg( \int_0^x f \bigg)^q w(x)\,dx\bigg)^{\frac{1}{q}}}{\bigg( \int_0^{\infty} f^p v \bigg)^{\frac{1}{p}}} \approx \bigg( \int_0^{\infty} \bigg( \int_x^{\infty} w \bigg)^{\frac{q}{p-q}} \, w(x) \, \big[ \sigma_p(0,x) \big]^{\frac{pq}{p-q}} \,dx \bigg)^{\frac{p-q}{pq}}.
	$$
\end{thm}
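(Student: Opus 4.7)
The plan is to split the argument by the position of $p$ relative to $q$ and, in each regime, to establish the upper and lower bounds on the norm of the Hardy operator separately. Write $F(x) = \int_0^x f$; since $v$ is a weight, $\sigma_p(0,x)$ is strictly positive and finite for every $x > 0$ when $p > 1$, and the $p=1$ case is handled in parallel by replacing the $L^{p'}$-integral with an essential supremum wherever it appears.

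For the necessity direction I would test the inequality against the natural one-parameter family $f_a(t) = v(t)^{1-p'} \chi_{(0,a)}(t)$ (with the corresponding modification when $p=1$). A direct computation gives $\|f_a\|_{L^p(v)} = \sigma_p(0,a)^{p'/p}$ and $F(x) \ge \sigma_p(0,a)^{p'}$ for $x \ge a$, so the quotient in the theorem is bounded below by $\sigma_p(0,a)\bigl(\int_a^\infty w\bigr)^{1/q}$. Taking the supremum over $a > 0$ immediately recovers the right-hand side in case (i). In case (ii) a more intricate test is needed; the natural candidate is $f(t) = v(t)^{1-p'} \varphi(t)$ with $\varphi$ involving powers of $\int_t^\infty w$ and $\sigma_p(0,t)$ tuned so that, after Fubini, numerator and denominator reduce to complementary powers of the asserted integral, which forces the matching lower bound.

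For sufficiency I would select a discretising sequence $\{x_k\}_{k \in \z}$ with $\int_{x_k}^{\infty} w \approx 2^{-k}$ (adjusted at endpoints where this integral fails to exhaust $(0,\infty)$) and estimate
\begin{equation*}
\int_0^\infty F(x)^q w(x)\,dx \lesssim \sum_{k \in \z} 2^{-k} F(x_{k+1})^q \lesssim \sum_{k \in \z} 2^{-k} \Bigl( \sum_{j \le k} \int_{x_{j-1}}^{x_j} f \Bigr)^q.
\end{equation*}
Applying H\"older on each dyadic slab gives $\int_{x_{j-1}}^{x_j} f \le \sigma_p(x_{j-1},x_j) \bigl(\int_{x_{j-1}}^{x_j} f^p v\bigr)^{1/p}$. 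In case (i) the embedding $\ell^p \hookrightarrow \ell^q$ on the resulting sequence, together with the geometric character of $\{2^{-k}\}$ and Lemma~\ref{L:1.2}, collapses the double sum into the supremum on the right. In case (ii) the reverse embedding is provided by Proposition~\ref{prop.2.1}, producing an $\ell^{pq/(p-q)}$-weight whose continuous counterpart is the integral asserted in (ii).

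The main obstacle is the transition from the discrete equivalents built from $\{x_k\}$ back to the clean continuous form in the theorem. This hinges on Lemma~\ref{L:1.2} to absorb the geometric factors $2^{\pm k/q}$, on the monotonicity of $x \mapsto \int_x^\infty w$ and $x \mapsto \sigma_p(0,x)$ to pass from $\sup_k$ and $\sum_k$ to $\sup_{x>0}$ and $\int_0^\infty$, and on a standard truncation and approximation step at the endpoints of $(0,\infty)$ where the range of $x \mapsto \int_x^\infty w$ may fail to sweep out all dyadic levels.
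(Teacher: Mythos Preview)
The paper does not prove this theorem at all: it is quoted as a known characterization of the weighted Hardy inequality with a reference to Opic--Kufner, so there is no ``paper's own proof'' to compare against. Your sketch is a reasonable outline of one of the standard proofs (test functions for the lower bound, dyadic splitting of $\int_x^\infty w$ for the upper bound), and nothing more is required here since the result is being imported, not established.

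Two small remarks on the sketch itself, in case you intend to flesh it out. First, in the sufficiency direction of case~(ii) you invoke Proposition~\ref{prop.2.1}, but that proposition goes the wrong way: it extracts a necessary $\ell^\rho$ condition from a given embedding. What you actually need for sufficiency is the forward discrete H\"older inequality~\eqref{eq.1.1}, which produces the $\ell^{pq/(p-q)}$ weight you want as a multiplicative constant. Second, the hypothesis allows $0<q<1$, so the step where you pass from $\bigl(\sum_{j\le k}\int_{x_{j-1}}^{x_j}f\bigr)^q$ to a sum over~$j$ cannot rely on Minkowski; in that range one uses the elementary inequality $(\sum a_j)^q \le \sum a_j^q$ directly, and the subsequent interchange of sums still works because the outer weight $2^{-k}$ is geometrically decreasing (Lemma~\ref{L:1.2}). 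With those adjustments the argument goes through.
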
	

We next quote the following result concerning characterization of inequality involving supremum operator.
\begin{thm}\cite[Theorems 4.1 and 4.4]{gop}\label{thm44b}
	Let $1 \le p < \infty$, $ 0 < r < \infty $. Assume that $u \in \W (0,\infty) \cap C (0,\infty)$ and  $v,\,w \in \W (0,\infty)$.
	
	{\rm (a)} Let $ p \le r $. 	Then the inequality
	\begin{equation}\label{gogopick3}
	\bigg( \int_0^{\infty} \bigg[\sup_{y \in [x,\infty)} u(y) \int_0^y g(s)\, ds\bigg]^r w(x)\, dx \bigg)^{\frac{1}{r}} \le C \, \bigg( \int_0^{\infty} g(x)^p v(x) \, dx \bigg)^{\frac{1}{p}}
	\end{equation}
	holds for all $ g\in \mathfrak{M}^{+} (0,\infty)$ if and only if
	\begin{equation*}
	C_1 := \sup_{x \in (0,\infty)} \bigg( \int_0^x w \bigg)^{\frac{1}{r}} \bigg( \sup_{t \in [x,\infty)} u(t) \bigg) \sigma_p(0,x) < \infty
	\end{equation*}
	and
	\begin{equation*}
	C_2 := \sup_{x \in (0,\infty)} \bigg( \int_x^{\infty} \bigg( \sup_{\tau \in [t,\infty)} u(\tau) \bigg)^r w(t)\,dt \bigg)^{\frac{1}{r}} \sigma_p(0,x) < \infty.
	\end{equation*}
	Moreover, the least constant $C$ such that \eqref{gogopick3} holds for all $ g\in \mathfrak{M}^{+} $ satisfies $ C \approx C_1 + C_2$.
	
	{\rm (b)} Let $r < p$. Then inequality \eqref{gogopick3} holds for all $ g\in \mathfrak{M}^{+} (0,\infty)$ if and only if
	\begin{equation*}
	C_3 := \bigg(\int_0^{\infty} \bigg[\sup_{\tau \in [x,\infty)} \bigg[\sup_{y \in [\tau,\infty)}u(y) \bigg] \sigma_p(0,\tau) \bigg]^{\frac{pr}{p-r}} \bigg(\int_0^{x} w \bigg)^{\frac{p}{p-r}} w(x) \,dx \bigg)^{\frac{p-r}{pr}} < \infty
	\end{equation*}
	and
	\begin{equation*}
	C_4 := \bigg(\int_0^{\infty} \bigg(\int_{x}^{\infty} \bigg[\sup_{\tau \in [y,\infty)} u(\tau)\bigg]^r w(y) \, dy\bigg)^{\frac{r}{p-r}} \bigg[\sup_{\tau \in [x,\infty)} u(\tau)\bigg]^r [\sigma_p(0,x)]^{\frac{pr}{p-r}} \, w(x)\,dx \bigg)^{\frac{p-r}{pr}} < \infty.
	\end{equation*}
	Moreover, the least constant $C$ such that \eqref{gogopick3} holds for all $ g\in \mathfrak{M}^{+} $ satisfies $ C \approx C_3 + C_4$.
	
\end{thm}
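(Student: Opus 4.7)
The plan is to reduce Theorem~\ref{thm44b} to two weighted inequalities, one handled by Theorem~\ref{thm.Copson} and the other by a discretization argument. Introduce the non-increasing envelope $\bar u(x) := \sup_{y \in [x,\infty)} u(y)$. Splitting $\int_0^y g = \int_0^x g + \int_x^y g$ for $y \ge x$, taking the supremum, and using $\sup(a+b) \le \sup a + \sup b$ together with the obvious lower bounds $\int_0^y g \ge \int_0^x g$ and $\int_0^y g \ge \int_x^y g$, one obtains the pointwise two-sided estimate
\begin{equation*}
\sup_{y \in [x,\infty)} u(y) \int_0^y g \;\approx\; \bar u(x) \int_0^x g \;+\; \sup_{y \in [x,\infty)} u(y) \int_x^y g,
\end{equation*}
with absolute constants. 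Consequently, \eqref{gogopick3} is equivalent, with comparable best constants, to the simultaneous validity of the inequality
\begin{equation*}
\bigg(\int_0^\infty \bar u(x)^r \Big(\int_0^x g\Big)^r w(x)\,dx\bigg)^{1/r} \le C \bigg(\int_0^\infty g^p v\bigg)^{1/p} \tag{I}
\end{equation*}
and the inequality
\begin{equation*}
\bigg(\int_0^\infty \Big[\sup_{y \in [x,\infty)} u(y) \int_x^y g\Big]^r w(x)\,dx\bigg)^{1/r} \le C \bigg(\int_0^\infty g^p v\bigg)^{1/p}. \tag{II}
\end{equation*}

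Inequality (I) is a classical weighted Hardy inequality with left-hand weight $\bar u(x)^r w(x)$ and right-hand weight $v$. Applying Theorem~\ref{thm.Copson} with its parameter $q$ specialized to the present $r$: when $p \le r$, (I) is equivalent to $\sup_x(\int_x^\infty \bar u(t)^r w(t)\,dt)^{1/r} \sigma_p(0,x) < \infty$, which after unpacking $\bar u = \sup_{\tau \ge \cdot} u(\tau)$ is exactly $C_2$; when $r < p$, (I) is equivalent to an integral condition that expands precisely to $C_4$.

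Inequality (II) is treated by discretization. Choose a covering sequence $\{x_k\}_{k \in \z}$ adapted to $w$ (for instance so that $\int_0^{x_k} w \approx 2^k$). On each block $[x_k, x_{k+1})$, split the inner supremum into a local part with $y \in [x, x_{k+1}]$ and a tail with $y \ge x_{k+1}$; since the sequence is geometric in $w$, the tail can be absorbed into the neighboring block by a telescoping argument via Lemma~\ref{L:1.2}. The local part is dominated by $\big(\sup_{y \in [x_k,x_{k+1}]} u(y)\big) \int_{x_k}^{x_{k+1}} g$, so (II) reduces to the discrete inequality
\begin{equation*}
\Big\| \Big\{ \Big(\textstyle\int_{x_k}^{x_{k+1}} w\Big)^{1/r} \Big(\sup_{y \ge x_k} u(y)\Big) \int_{x_k}^{x_{k+1}} g \Big\} \Big\|_{\ell^r(\z)} \lesssim \|g\|_{L^p(v)}.
\end{equation*}
Applying the discrete H\"older inequality \eqref{eq.1.1} when $p \le r$, and the Landau-type Proposition~\ref{prop.2.1} together with duality when $r < p$, and then undiscretizing via Lemma~\ref{L:1.2}, yields precisely the continuous condition $C_1$ (respectively $C_3$). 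Necessity in every regime is routine: testing \eqref{gogopick3} against $g = v^{1-p'}\chi_{(0,x)}$ and level-set truncations thereof reproduces each of the four conditions.

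The main obstacle is the discretization step for (II). The operator $g \mapsto \sup_{y \ge x} u(y) \int_x^y g$ is neither a pure Hardy operator (integration starts at $x$) nor a pure maximal operator (the supremum involves $\int_x^y g$), and the sup may be attained far outside the block containing $x$, so block-additivity fails. The delicate point is to choose the covering so that the blocks are simultaneously geometric in $w$ and compatible with the monotonicity of $\bar u$; this double compatibility is what forces the appearance in $C_3$ of the nested outer supremum $\sup_{\tau \ge x}[\sup_{y \ge \tau} u(y)]\sigma_p(0,\tau)$, rather than the simpler expression one would naively expect from (I) alone.
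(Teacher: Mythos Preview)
First, a framing remark: the paper does not prove Theorem~\ref{thm44b}. It is quoted verbatim from \cite{gop} as a black-box tool (and then specialized in Theorem~\ref{thm44b+1}). So there is no in-paper argument to compare against; the comments below concern the internal correctness of your sketch.

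Your pointwise splitting $\sup_{y\ge x}u(y)\int_0^y g \approx \bar u(x)\int_0^x g + \sup_{y\ge x}u(y)\int_x^y g$ is valid, and the identification of inequality (I) with $C_2$ (resp.\ $C_4$) via Theorem~\ref{thm.Copson} applied with weight $\bar u^r w$ is correct. The problem lies entirely in your treatment of (II).

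After the telescoping you outline, (II) reduces (for $p\le r$) to the block condition
\[
\sup_{k}\,2^{k/r}\,\bar u(x_k)\,\sigma_p(x_k,x_{k+1}),
\]
which involves the \emph{local} quantity $\sigma_p(x_k,x_{k+1})$. This does not anti-discretize to $C_1=\sup_x(\int_0^x w)^{1/r}\bar u(x)\,\sigma_p(0,x)$: to pass from $\sigma_p(x_k,x_{k+1})$ to $\sigma_p(0,x_{k+1})$ is harmless, but then $\bar u$ sits at $x_k$ while the rest sits at $x_{k+1}$, and the monotonicities of $\bar u$ and $\sigma_p(0,\cdot)$ run in opposite directions, so no Lemma~\ref{L:1.2}-type manoeuvre closes the gap. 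In fact (II) is \emph{not} characterized by $C_1$ alone. Take $u\equiv 1$: then (II) collapses to the Copson inequality $\|\int_x^{\infty}g\|_{L^r(w)}\lesssim\|g\|_{L^p(v)}$, whose sharp condition for $p\le r$ is $\sup_x(\int_0^x w)^{1/r}\sigma_p(x,\infty)$, not $C_1=\sup_x(\int_0^x w)^{1/r}\sigma_p(0,x)$; one can easily arrange the former finite and the latter infinite. So the sentence ``undiscretizing via Lemma~\ref{L:1.2} yields precisely the continuous condition $C_1$'' is false as stated.

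What is true is that $C_1$ and $C_2$ \emph{together} imply (II) (since they imply \eqref{gogopick3}, which dominates (II)), but establishing this directly requires more than block H\"older on (II); one has to keep track of two scales at once, which is exactly what the discretization in \cite{gop} does without first splitting into (I)$+$(II). Your final paragraph correctly senses this ``double compatibility'' obstacle but does not overcome it; as written, the sufficiency argument for (II) is missing a genuine idea.
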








\section{Equivalence and reduction theorems}\label{eq.red.thms}

In this section we prove the equivalence and reduction theorems.

\begin{defi}\label{rem.disc.}
	Assume that $u$ is a weight function on $(0,+\infty)$. If $\int_0^{\infty} u(t)\,dt = +\infty$, let $\{x_k\}_{-\infty}^{+\infty} \subset (0,\infty)$ be a strictly increasing sequence  such that $\int_0^{x_k} u(t)\,dt = 2^k$, $k \in \Z$. Denote $M:= +\infty$ and $\z = \Z$, when $\int_0^{\infty} u(t)\,dt = \infty$.  If $\int_0^{\infty} u(t)\,dt < +\infty$, define a strictly increasing sequence $\{x_k\}_{k = -\infty}^M$ such that $\int_0^{x_k} u(t)\,dt = 2^k$, $-\infty < k \le M$, where $M$ satisfies the inequality $2^M \le \int_0^{\infty} u(t)\,dt < 2^{M+1}$. Denote $x_{M+1} : = \infty$ and $\z : = \{k \in \Z:\, k \le M\}$, when $\int_0^{\infty} u(t)\,dt < \infty$.
   	Obviously, $\bigcup_{k \in \z} [x_k,x_{k+1}) = (0,\infty)$ in both cases. The sequence $\{x_k\}_{k \in \z}$ is called a covering sequence. 
\end{defi}

\begin{rem}
We shall use the following equivalences without mentioning anytime we need them.
	
Assume that $\{x_k\}_{k \in \z}$ is a covering sequence. Clearly, 
$$
\int_{x_{k-1}}^{x_k} u \approx 2^k, \quad k \in \z.
$$
Moreover,
$$
\int_{x_{k-1}}^{x_k} \bigg( \int_{x_{k-1}}^t u \bigg)^{\frac{r}{p-r}} u(t)\,dt \approx \int_{x_{k-1}}^{x_k} \bigg( \int_0^t u \bigg)^{\frac{r}{p-r}} u(t)\,dt  \approx 2^{k\frac{p}{p-r}}
$$
when $0 < r < p < \infty$.
\end{rem}

Our equivalency statement reads as follows:
\begin{thm}\label{thm.1.4}
	Let $0 < q,\, r < \infty$ and  $u,\,v,\,w \in {\mathcal W}\I$.	Assume that $\{x_k\}_{k \in \z}$ is a covering sequence.
	Then
	\begin{align*}
	\bigg( \int_0^{\infty} \bigg( \int_x^{\infty} \bigg( \int_0^t h \bigg)^q w(t)\,dt
	\bigg)^{\frac{r}{q}} u(x)\,dx \bigg)^{\frac{1}{r}} & \\
	& \hspace{-3cm} \thickapprox \, \bigg\| \bigg\{ 2^{\frac{k}{r}} \bigg(
	\int_{x_k}^{x_{k+1}} \bigg( \int_{x_k}^t h \bigg)^q
	w(t)\,dt \bigg)^{\frac{1}{q}} \bigg\} \bigg\|_{\ell^r(\z)} \\
	& \hspace{-2.5cm} +  \bigg( \int_0^{\infty}  u(x) \bigg[ \sup_{s \in [x,\infty)} \bigg(
	\int_s^{\infty} w \bigg)^{\frac{1}{q}} \bigg( \int_0^s h\bigg) \bigg]^r \,dx \bigg)^{\frac{1}{r}} 
	\end{align*}
	with constants independent of $h \in {\mathfrak M}^+(0,\infty)$.
\end{thm}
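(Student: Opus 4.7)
The plan is to discretize the left-hand side along the covering sequence $\{x_k\}_{k \in \z}$, split each inner integral at the breakpoints $x_m$ using $(a+b)^q \approx a^q + b^q$, collapse the resulting tail sums via Lemma \ref{L:1.2}, and then prove the two directions separately. Writing $F(t) := \int_0^t h$, $W(s) := \int_s^\infty w$, and letting $I$ denote the $r$th power of the left-hand side, I first note that since $x \mapsto \int_x^\infty F^q w$ is non-increasing and $\int_{x_k}^{x_{k+1}} u \approx 2^k$, a routine index-shift yields
\[
I \approx \sum_{k \in \z} 2^k \bigg(\int_{x_k}^\infty F^q w \bigg)^{r/q}.
\]
On each block $[x_m, x_{m+1})$, writing $F(t) = F(x_m) + \int_{x_m}^t h$ and invoking $(a+b)^q \approx a^q + b^q$ produces $\int_{x_m}^{x_{m+1}} F^q w \approx A_m + B_m$, where $A_m := F(x_m)^q \int_{x_m}^{x_{m+1}} w$ and $B_m := \int_{x_m}^{x_{m+1}} (\int_{x_m}^t h)^q w(t)\,dt$. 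Summing on $m \ge k$, applying $(x+y)^{r/q} \approx x^{r/q} + y^{r/q}$, and invoking Lemma \ref{L:1.2} in $\ell^{r/q}(\z)$ with the geometrically increasing weight $\tau_k := 2^{kq/r}$ collapses both tail sums to their diagonal terms:
\[
I \approx \sum_k 2^k A_k^{r/q} + \sum_k 2^k B_k^{r/q}.
\]

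For the $(\lesssim)$ direction, the $B$-sum already equals $\|\{2^{k/r} B_k^{1/q}\}\|_{\ell^r(\z)}^r$, matching the discrete term in the theorem. For the $A$-sum, the crude estimate $A_k \le F(x_k)^q W(x_k)$ gives $A_k^{r/q} \le [\sup_{s \ge x_k} F(s) W(s)^{1/q}]^r$, and the same discretization applied in reverse to the non-increasing map $x \mapsto \sup_{s \ge x} F(s) W(s)^{1/q}$ yields
\[
\sum_k 2^k A_k^{r/q} \lesssim \int_0^\infty u(x) \bigg[\sup_{s \ge x} F(s) W(s)^{1/q} \bigg]^r dx.
\]

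For $(\gtrsim)$ I would bypass the contraction and argue directly on $I$: for any $s \ge x$, $\int_x^\infty F^q w \ge F(s)^q W(s)$ by monotonicity of $F$, so taking $\sup_{s \ge x}$ and integrating against $u$ produces $I \gtrsim \int u(x)[\sup_{s \ge x} F(s) W(s)^{1/q}]^r dx$. Separately, for $x \in [x_k, x_{k+1})$, $\int_x^\infty F^q w \ge \int_{x_{k+1}}^{x_{k+2}} (\int_{x_{k+1}}^t h)^q w(t)\,dt = B_{k+1}$, whence $I \gtrsim \sum_k 2^k B_k^{r/q}$ after one index shift. Adding the two lower bounds closes the equivalence. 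The main technical obstacle is the Lemma \ref{L:1.2} contraction, since the quasi-norm exponent $r/q$ may be arbitrarily small and the resulting constants depend delicately on the ratio $2^{q/r}$; a secondary concern is the edge case $\int_0^\infty u < \infty$, in which $\z = \{k \le M\}$ and $x_{M+1} = \infty$, but the terminal sum term at $k = M$ is dominated by the $k = M-1$ term up to a factor of $2$ and is harmlessly absorbed into the implicit constants.
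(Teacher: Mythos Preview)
Your proof is correct and follows essentially the same route as the paper's: discretize via $\int_{x_k}^{x_{k+1}} u \approx 2^k$, collapse tail sums with Lemma~\ref{L:1.2}, split $F(t)=F(x_k)+\int_{x_k}^t h$ to obtain the two pieces, and then bound each direction. The only cosmetic differences are that the paper collapses first and splits second (applying Lemma~\ref{L:1.2} once rather than twice), and in the $(\gtrsim)$ direction it reads off the discrete term from the already-established equivalence \eqref{eq.0.0.1} instead of your block-shift argument; neither change affects the substance.
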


\begin{proof}
	
	Since
	\begin{align*}
	\bigg( \int_0^{\infty} \bigg( \int_x^{\infty} \bigg( \int_0^t h \bigg)^q w(t)\,dt
	\bigg)^{\frac{r}{q}} u(x)\,dx \bigg)^{\frac{1}{r}} & \notag \\
	& \hspace{-3cm} = \, \bigg\| \bigg\{ \bigg(\int_{x_k}^{x_{k+1}} \bigg( \int_x^{\infty} \bigg( \int_0^t h \bigg)^q w(t)\,dt \bigg)^{\frac{r}{q}} u(x)\,dx \bigg)^{\frac{1}{r}} \bigg\} \bigg\|_{\ell^r(\z)} \\
	& \hspace{-3cm} \le \, \bigg\| \bigg\{ \bigg(\int_{x_k}^{x_{k+1}} u \bigg)^{\frac{1}{r}} \bigg( \int_{x_k}^{\infty} \bigg( \int_0^t h \bigg)^q w(t)\,dt \bigg)^{\frac{1}{q}}  \bigg\} \bigg\|_{\ell^r(\z)} \\
	& \hspace{-3cm} = \, \bigg\| \bigg\{ 2^{\frac{k}{r}} \bigg( \int_{x_k}^{\infty}	\bigg( \int_0^t h \bigg)^q w(t) \,dt \bigg)^{\frac{1}{q}} \bigg\} \bigg\|_{\ell^r(\z)},
	\end{align*}
	and
	\begin{align*}
	\bigg( \int_0^{\infty} \bigg( \int_x^{\infty} \bigg( \int_0^t h \bigg)^q w(t)\,dt
	\bigg)^{\frac{r}{q}} u(x)\,dx \bigg)^{\frac{1}{r}} & \notag \\
	& \hspace{-3cm} \ge \, \bigg\| \bigg\{ \bigg(\int_{x_{k-1}}^{x_k} \bigg( \int_x^{\infty} \bigg( \int_0^t h \bigg)^q w(t)\,dt \bigg)^{\frac{r}{q}} u(x)\,dx \bigg)^{\frac{1}{r}} \bigg\} \bigg\|_{\ell^r(\z)} \\
	& \hspace{-3cm} \ge \, \bigg\| \bigg\{ \bigg(\int_{x_{k-1}}^{x_k} u \bigg)^{\frac{1}{r}} \bigg( \int_{x_k}^{\infty} \bigg( \int_0^t h \bigg)^q w(t)\,dt \bigg)^{\frac{1}{q}}  \bigg\} \bigg\|_{\ell^r(\z)} \\
	& \hspace{-3cm} \approx \, \bigg\| \bigg\{ 2^{\frac{k}{r}} \bigg( \int_{x_k}^{\infty}	\bigg( \int_0^t h \bigg)^q w(t) \,dt \bigg)^{\frac{1}{q}} \bigg\} \bigg\|_{\ell^r(\z)},
	\end{align*}
	then
	\begin{align*}
	\bigg( \int_0^{\infty} \bigg( \int_x^{\infty} \bigg( \int_0^t h \bigg)^q w(t)\,dt
	\bigg)^{\frac{r}{q}} u(x)\,dx \bigg)^{\frac{1}{r}} & \notag \\
	& \hspace{-3cm} \approx \, \bigg\| \bigg\{ 2^{\frac{k}{r}} \bigg( \int_{x_k}^{\infty} \bigg( \int_0^t h \bigg)^q w(t) \,dt \bigg)^{\frac{1}{q}} \bigg\} \bigg\|_{\ell^r(\z)}.
	\end{align*}
	
	By Lemma \ref{L:1.2}, we get that
	\begin{align}
	\bigg( \int_0^{\infty} \bigg( \int_x^{\infty} \bigg( \int_0^t h \bigg)^q w(t)\,dt
	\bigg)^{\frac{r}{q}} u(x)\,dx \bigg)^{\frac{1}{r}} & \notag \\	
	& \hspace{-3cm}  \approx \, \bigg\| \bigg\{ 2^{\frac{k}{r}} \bigg( \int_{x_k}^{x_{k+1}}	\bigg( \int_0^t h \bigg)^q w(t) \,dt \bigg)^{\frac{1}{q}} \bigg\} \bigg\|_{\ell^r(\z)} \notag \\
	& \hspace{-3cm}  \approx \, \bigg\| \bigg\{ 2^{\frac{k}{r}} \bigg( \int_{x_k}^{x_{k+1}}	\bigg( \int_{x_k}^t h \bigg)^q w(t)\,dt \bigg)^{\frac{1}{q}} \bigg\} \bigg\|_{\ell^r(\z)} \notag \\
	& \hspace{-2.5cm} + \bigg\| \bigg\{ 2^{\frac{k}{r}} \bigg( \int_{x_k}^{x_{k+1}} w \bigg)^{\frac{1}{q}} \bigg( \int_0^{x_k} h \bigg) \bigg\} \bigg\|_{\ell^r(\z)}. \label{eq.0.0.1}
	\end{align}
	
	Since
	\begin{align*}
	\bigg\| \bigg\{ 2^{k / r} \bigg( \int_{x_k}^{x_{k+1}} w \bigg)^{\frac{1}{q}} \bigg( \int_0^{x_k} h \bigg) \bigg\} \bigg\|_{\ell^r(\z)} & \\
	& \hspace{-3cm} \lesssim \bigg\| \bigg\{\bigg( \int_{x_{k-1}}^{x_k} u(x)\,dx \cdot \bigg[ \sup_{s \in [x_k,x_{k+1})} \bigg( \int_s^{x_{k+1}} w \bigg)^{\frac{1}{q}} \bigg( \int_0^s h \bigg) \bigg]^r \bigg)^{\frac{1}{r}} \bigg\} \bigg\|_{\ell^r(\z)} \\
	& \hspace{-3cm} \le \bigg\| \bigg\{ \bigg( \int_{x_{k-1}}^{x_k} u(x) \bigg[ \sup_{s \in [x,x_{k+1})} \bigg(
	\int_s^{x_{k+1}} w \bigg)^{\frac{1}{q}} \bigg( \int_0^s h \bigg) \bigg]^r \,dx\bigg)^{\frac{1}{r}} \bigg\} \bigg\|_{\ell^r(\z)} \\
	& \hspace{-3cm} \le \bigg\| \bigg\{ \bigg( \int_{x_{k-1}}^{x_k} u(x) \bigg[ \sup_{s \in [x,\infty)} \bigg(
	\int_s^{\infty} w \bigg)^{\frac{1}{q}} \bigg( \int_0^s h\bigg) \bigg]^r \,dx\bigg)^{\frac{1}{r}} \bigg\} \bigg\|_{\ell^r(\z)} \\
	& \hspace{-3cm} \le \bigg( \int_0^{\infty}  u(x) \bigg[ \sup_{s \in [x,\infty)} \bigg(
	\int_s^{\infty} w \bigg)^{\frac{1}{q}} \bigg( \int_0^s h\bigg) \bigg]^r \,dx \bigg)^{\frac{1}{r}},
	\end{align*}
	on using \eqref{eq.0.0.1}, we arrive at
	\begin{align*}
	\bigg( \int_0^{\infty} \bigg( \int_x^{\infty} \bigg( \int_0^t h \bigg)^q w(t)\,dt
	\bigg)^{\frac{r}{q}} u(x)\,dx \bigg)^{\frac{1}{r}} & \\
	& \hspace{-3cm} \lesssim \, \bigg\| \bigg\{ 2^{\frac{k}{r}} \bigg(
	\int_{x_k}^{x_{k+1}} \bigg( \int_{x_k}^t h \bigg)^q
	w(t)\,dt \bigg)^{\frac{1}{q}} \bigg\} \bigg\|_{\ell^r(\z)} \\
	& \hspace{-2.5cm} +  \bigg( \int_0^{\infty}  u(x) \bigg[ \sup_{s \in [x,\infty)} \bigg(
	\int_s^{\infty} w \bigg)^{\frac{1}{q}} \bigg( \int_0^s h\bigg) \bigg]^r \,dx \bigg)^{\frac{1}{r}}. 
	\end{align*}
	
	For the converse, note that, by \eqref{eq.0.0.1},
	\begin{align*}
	\bigg\| \bigg\{ 2^{\frac{k}{r}} \bigg(
	\int_{x_k}^{x_{k+1}} \bigg( \int_{x_k}^t h \bigg)^q
	w(t)\,dt\bigg)^{\frac{1}{q}} \bigg\} \bigg\|_{\ell^r(\z)} & \\
	& \hspace{-3cm} \lesssim \, \bigg( \int_0^{\infty} \bigg( \int_x^{\infty} \bigg( \int_0^t h \bigg)^q w(t)\,dt
	\bigg)^{\frac{r}{q}} u(x)\,dx \bigg)^{\frac{1}{r}}.
	\end{align*}
	
	On the other hand,
	\begin{align*}
	\bigg( \int_0^{\infty}  u(x) \bigg[ \sup_{s \in [x,\infty)} \bigg(
	\int_s^{\infty} w \bigg)^{\frac{1}{q}} \bigg( \int_0^s h\bigg) \bigg]^r \,dx\bigg)^{\frac{1}{r}} & \\
	& \hspace{-3cm} \le \bigg( \int_0^{\infty}  u(x) \bigg[ \sup_{s \in [x,\infty)} \bigg(
	\int_s^{\infty} \bigg( \int_0^t h \bigg)^q w(t)\,dt \bigg)^{\frac{1}{q}} \bigg]^r \,dx \bigg)^{\frac{1}{r}} \\
	& \hspace{-3cm} = \bigg( \int_0^{\infty}  u(x) \bigg(
	\int_x^{\infty} \bigg( \int_0^t h \bigg)^q w(t)\,dt \bigg)^{\frac{r}{q}}  \,dx \bigg)^{1/r}.
	\end{align*}
	
	We arrive at 
	\begin{align*}
	\bigg\| \bigg\{ 2^{\frac{k}{r}} \bigg(
	\int_{x_k}^{x_{k+1}} \bigg( \int_{x_k}^t h \bigg)^q
	w(t)\,dt \bigg)^{\frac{1}{q}} \bigg\} \bigg\|_{\ell^r(\z)} & \\
	& \hspace{-4.5cm} +  \bigg( \int_0^{\infty}  u(x) \bigg[ \sup_{s \in [x,\infty)} \bigg(
	\int_s^{\infty} w \bigg)^{\frac{1}{q}} \bigg( \int_0^s h\bigg) \bigg]^r \,dx \bigg)^{\frac{1}{r}} \\
	& \hspace{-5cm}  \lesssim  \bigg( \int_0^{\infty}  u(x) \bigg(
	\int_x^{\infty} \bigg( \int_0^t h \bigg)^q w(t)\,dt \bigg)^{\frac{r}{q}} \,dx \bigg)^{\frac{1}{r}}
	\end{align*}
	by combining the previous two inequalities.	
	
	The proof is completed. 
\end{proof}

So, we are able to formulate our reduction statement.
\begin{thm}\label{thm.1.5}
	Let $0 < q,\, r < \infty$ and  $u,\,v,\,w \in {\mathcal W}\I$.	Assume that $\{x_k\}_{k \in \z}$ is a covering sequence.
	Then inequality \eqref{main} holds if and only if both of inequalities \eqref{ineq.discr} and \eqref{ineq.supr} hold.
\end{thm}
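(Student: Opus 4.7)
The plan is to derive Theorem \ref{thm.1.5} as an essentially immediate consequence of the equivalence established in Theorem \ref{thm.1.4}. The first thing I would record is that the right-hand side of \eqref{ineq.discr} coincides identically with the right-hand side of \eqref{main}: since by Definition \ref{rem.disc.} the intervals $[x_k,x_{k+1})$ partition $(0,\infty)$, one has
$$
\bigg\|\bigg\{\bigg(\int_{x_n}^{x_{n+1}} h^p v\bigg)^{1/p}\bigg\}\bigg\|_{\ell^p(\z)} = \bigg(\sum_{k \in \z}\int_{x_k}^{x_{k+1}} h^p v\bigg)^{1/p} = \bigg(\int_0^{\infty} h^p v\bigg)^{1/p}.
$$
Throughout what follows I can therefore treat the right-hand sides of \eqref{main} and \eqref{ineq.discr} as the same expression. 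Denote by $A(h)$ the discrete $\ell^r(\z)$-quantity and by $B(h)$ the supremum expression appearing on the right-hand side of the equivalence in Theorem \ref{thm.1.4}.

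For the sufficiency direction, suppose both \eqref{ineq.discr} and \eqref{ineq.supr} hold. Theorem \ref{thm.1.4} gives
$$
\bigg( \int_0^{\infty} \bigg( \int_x^{\infty} \bigg( \int_0^t h \bigg)^q w(t)\,dt \bigg)^{r/q} u(x)\,dx \bigg)^{1/r} \lesssim A(h) + B(h).
$$
Bounding $A(h)$ via \eqref{ineq.discr} and $B(h)$ via \eqref{ineq.supr}, each by a constant multiple of $(\int_0^{\infty} h^p v)^{1/p}$, produces \eqref{main} with constant comparable to the sum of the constants in \eqref{ineq.discr} and \eqref{ineq.supr}.

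For the necessity direction, suppose \eqref{main} holds. Theorem \ref{thm.1.4} also provides the matching lower bound
$$
A(h) + B(h) \lesssim \bigg( \int_0^{\infty} \bigg( \int_x^{\infty} \bigg( \int_0^t h \bigg)^q w(t)\,dt \bigg)^{r/q} u(x)\,dx \bigg)^{1/r}.
$$
Since $A(h),B(h) \ge 0$, this splits into the two separate estimates $A(h) \lesssim \text{LHS of }\eqref{main}$ and $B(h) \lesssim \text{LHS of }\eqref{main}$. Combining each with the assumed validity of \eqref{main} recovers, respectively, \eqref{ineq.discr} and \eqref{ineq.supr}.

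The real work has already been carried out inside Theorem \ref{thm.1.4}; the present reduction is purely formal. The only point that needs any care is the identification of the discrete $\ell^p(\z)$-norm with the continuous $L^p(v)$-norm on the right-hand side of \eqref{ineq.discr}, which follows from the covering property of $\{x_k\}_{k \in \z}$. I do not anticipate any genuine obstacle beyond this bookkeeping step.
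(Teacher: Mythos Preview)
Your proposal is correct and follows exactly the same approach as the paper, which simply states that the result ``immediately follows from Theorem \ref{thm.1.4}.'' In fact, you supply more detail than the paper does, including the observation that the covering property identifies the discrete $\ell^p(\z)$-norm with the continuous $L^p(v)$-norm and the explicit split into the sufficiency and necessity directions.
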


\begin{proof}
The proof of the statement immediately follows from Theorem \ref{thm.1.4}.
\end{proof}	

		
\section{Solution of inequality \eqref{ineq.supr}}

In this section we present the solution of inequality\eqref{ineq.supr}.

\begin{thm}\label{thm44b+1}
	Let $1 \le p < \infty$, $0 < r < \infty$. Assume that $u,\,v,\,w \in \W (0,\infty)$.
	
	{\rm (a)} Let $p \le r$. Then inequality \eqref{ineq.supr} holds for all $ g\in \mathfrak{M}^{+} (0,\infty)$ if and only if
	\begin{equation*}
	D_1 := \sup_{x \in (0,\infty)} \bigg( \int_0^x u \bigg)^{\frac{1}{r}} \bigg( \int_x^{\infty} w \bigg)^{\frac{1}{q}} \sigma_p(0,x) < \infty
	\end{equation*}
	and
	\begin{equation*}
	D_2 := \sup_{x \in (0,\infty)} \bigg( \int_x^{\infty} \bigg( \int_t^{\infty} w \bigg)^{\frac{r}{q}} u(t)\,dt \bigg)^{\frac{1}{r}} \sigma_p(0,x) < \infty.
	\end{equation*}
	Moreover, the least constant $C$ such that \eqref{ineq.supr} holds for all $ g\in \mathfrak{M}^{+} $ satisfies $ C \approx D_1 + D_2$.
	
	{\rm (b)} Let $r < p$. Then inequality \eqref{ineq.supr} holds for all $ h \in \mathfrak{M}^{+} (0,\infty)$ if and only if
	\begin{equation*}
	D_3 := \bigg( \int_0^{\infty} \bigg[\sup_{\tau \in [x,\infty)} \bigg( \int_{\tau}^{\infty} w \bigg)^{\frac{1}{q}} \sigma_p(0,\tau) \bigg]^{\frac{pr}{p-r}} \bigg( \int_0^x u \bigg)^{\frac{r}{p-r}} u(x) \,dx \bigg)^{\frac{p-r}{pr}} < \infty
	\end{equation*}
	and
	\begin{equation*}
	D_4 := \bigg(\int_0^{\infty} \bigg(\int_{x}^{\infty} \bigg( \int_y^{\infty} w \bigg)^{\frac{r}{q}} u(y) \, dy\bigg)^{\frac{r}{p-r}} \bigg( \int_x^{\infty} w	\bigg)^{\frac{r}{q}} [\sigma_p(0,x)]^{\frac{pr}{p-r}} \, u(x)\,dx \bigg)^{\frac{pr}{p-r}} < \infty.
	\end{equation*}
	Moreover, the least constant $C$ such that \eqref{ineq.supr} holds for all $ g\in \mathfrak{M}^{+} $ satisfies $ C \approx D_3 + D_4$.
\end{thm}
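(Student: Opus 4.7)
The approach is a direct reduction to Theorem \ref{thm44b}. Introduce the auxiliary function
\[
U(s) := \bigg(\int_s^{\infty} w\bigg)^{1/q}, \qquad s \in (0,\infty),
\]
so that inequality \eqref{ineq.supr} takes the form
\[
\bigg(\int_0^{\infty} u(x) \bigg[\sup_{s \in [x,\infty)} U(s) \int_0^s h\bigg]^r \,dx\bigg)^{1/r} \le C \bigg(\int_0^{\infty} h^p v\bigg)^{1/p}.
\]
This is exactly the inequality \eqref{gogopick3} after the relabeling: the function denoted $u$ in \eqref{gogopick3} is played here by $U$, the outer weight denoted $w$ in \eqref{gogopick3} is played here by our $u$, while $v$ and $g \mapsto h$ are unchanged. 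Thus Theorem \ref{thm44b}(a) applies when $p \le r$ and Theorem \ref{thm44b}(b) applies when $r < p$, characterizing \eqref{ineq.supr} by the corresponding $C_i$ conditions under this substitution.

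The remaining work is algebraic: verify that the substituted $C_i$'s coincide with the stated $D_i$'s. The only non-trivial observation is that $U$ is non-increasing on $(0,\infty)$, whence
\[
\sup_{t \in [x,\infty)} U(t) = U(x) = \bigg(\int_x^{\infty} w\bigg)^{1/q}, \qquad x \in (0,\infty).
\]
This identity immediately collapses the innermost supremum in each of the $C_i$ after substitution. For part (a), $C_1$ and $C_2$ become $D_1$ and $D_2$ directly. For part (b), the nested supremum $\sup_{\tau \ge x} [\sup_{y \ge \tau} U(y)] \, \sigma_p(0,\tau)$ in (the substituted) $C_3$ collapses to $\sup_{\tau \ge x} U(\tau) \sigma_p(0,\tau)$, yielding $D_3$; the inner $\sup_{\tau \ge y} U(\tau)$ and $\sup_{\tau \ge x} U(\tau)$ appearing in $C_4$ collapse to $U(y)$ and $U(x)$ respectively, yielding $D_4$ after substituting the definition of $U$. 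Note that the remaining outer supremum over $[x,\infty)$ in $D_3$ cannot be simplified further, since $\sigma_p(0,\cdot)$ is non-decreasing while $U$ is non-increasing, so the product need not be monotone.

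Since Theorem \ref{thm44b} is invoked as a black box and the rewriting is purely algebraic, no real obstacle arises: the proof is essentially a substitution plus one monotonicity identity. The only point demanding care is the faithful bookkeeping of the substitution $u \leftrightarrow U$, $w \leftrightarrow u$ inside the (sometimes nested) supremum expressions appearing in Theorem \ref{thm44b}(b).
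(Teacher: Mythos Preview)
Your approach is exactly the paper's: the paper's proof is the single line ``The statement directly follows from Theorem~\ref{thm44b},'' and you have simply spelled out that substitution ($u \mapsto U(s)=(\int_s^\infty w)^{1/q}$, $w \mapsto u$) together with the observation that $U$ is non-increasing so the inner suprema collapse. Nothing more is needed.
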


\begin{proof}
The statement directly follows from Theorem \ref{thm44b}.	
\end{proof}
	

\section{Discrete solution of inequality \eqref{ineq.discr}.}\label{s.5}
		
Now we give a discrete characterization of inequality \eqref{ineq.discr}.
\begin{lem}\label{lem.1.6}
	Let $1 \le p < \infty$, $0 < q,\, r < \infty$ and  $u,\,v,\,w \in {\mathcal W}(0,\infty)$. Assume that $\{x_k\}_{k \in \z}$ is a covering sequence.
	
	{\rm (i)} If $p \le q$, then inequality \eqref{ineq.discr} holds with constant independent of $h \in {\mathfrak M}^+(0,\infty)$ if and only if $A_1 < \infty$, where
	$$
	A_1 := \bigg\| \bigg\{ 2^{\frac{k}{r}} \bigg( \esup_{x \in [x_k,x_{k+1})} \bigg( \int_x^{x_{k+1}} w \bigg)^{\frac{1}{q}} \sigma_p(x_k,x) \bigg) \bigg\} \bigg\|_{\ell^{\rho}(\z)}.
	$$
	Moreover, if $C$ is the best constant in \eqref{ineq.discr}, then $C \approx A_1$.	
	
	{\rm (ii)} If $q < p$, then inequality \eqref{ineq.discr} holds with constant independent of $h \in {\mathfrak M}^+(0,\infty)$ if and only if $A_2 < \infty$, where
	$$
	A_2 := \bigg\| \bigg\{ 2^{\frac{k}{r}} \bigg( \int_{x_k}^{x_{k+1}} \bigg( \int_x^{x_{k+1}} w \bigg)^{\frac{q}{p-q}} w(x) \big[ \sigma_p(x_k,x) \big]^{\frac{pq}{p-q}} \,dx \bigg)^{\frac{p-q}{pq}} \bigg\} \bigg\|_{\ell^{\rho}(\z)}.
	$$
	Moreover, if $C$ is the best constant in \eqref{ineq.discr}, then $C \approx A_2$.	
\end{lem}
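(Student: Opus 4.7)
The plan is to decouple \eqref{ineq.discr} across the intervals $[x_k, x_{k+1})$, apply the classical weighted Hardy inequality of Theorem \ref{thm.Copson} locally on each interval, and then combine the resulting local bounds by means of a discrete H\"{o}lder/Landau-resonance argument. Set
\[
B_k(h) := \bigg(\int_{x_k}^{x_{k+1}} \bigg(\int_{x_k}^t h\bigg)^q w(t)\, dt\bigg)^{\frac{1}{q}}, \qquad D_k(h) := \bigg(\int_{x_k}^{x_{k+1}} h^p v\bigg)^{\frac{1}{p}}.
\]
Both quantities depend only on the restriction of $h$ to $[x_k, x_{k+1})$, so \eqref{ineq.discr} reads $\|\{2^{k/r} B_k(h)\}\|_{\ell^r(\z)} \le C\, \|\{D_k(h)\}\|_{\ell^p(\z)}$ and genuinely decouples across $k$.

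For each fixed $k$, Theorem \ref{thm.Copson} applied on $[x_k, x_{k+1})$ with the restricted weights yields the sharp local estimate $B_k(h) \le C_k D_k(h)$, where $C_k$ equals, up to constants independent of $k$, the supremum expression from Theorem \ref{thm.Copson}(i) when $p \le q$ and the integral expression from Theorem \ref{thm.Copson}(ii) when $q < p$. The problem therefore reduces to identifying the best constant in the sequence-level inequality
\[
\|\{2^{k/r} C_k d_k\}\|_{\ell^r(\z)} \le C\, \|\{d_k\}\|_{\ell^p(\z)}, \qquad \{d_k\} \subset [0, \infty),
\]
with the best constant in \eqref{ineq.discr}. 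Sufficiency is then immediate from the discrete H\"{o}lder inequality \eqref{eq.1.1}: if $\|\{2^{k/r} C_k\}\|_{\ell^{\rho}(\z)} < \infty$ (with $1/\rho = (1/r - 1/p)_+$), then
\[
\|\{2^{k/r} B_k(h)\}\|_{\ell^r} \le \|\{2^{k/r} C_k D_k(h)\}\|_{\ell^r} \le \|\{2^{k/r} C_k\}\|_{\ell^{\rho}} \, \|\{D_k(h)\}\|_{\ell^p},
\]
which is exactly \eqref{ineq.discr}.

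For necessity, assume \eqref{ineq.discr} holds with constant $C$. Fix $\varepsilon > 0$; for each $k$ choose a near-extremiser $\phi_k \in \mp^+([x_k, x_{k+1}))$ for the local inequality, normalised so that $D_k(\phi_k) = 1$ and $B_k(\phi_k) \ge (1-\varepsilon) C_k$. Given any nonnegative sequence $\{d_k\}$, the disjointly supported combination $h := \sum_k d_k \phi_k$ satisfies $D_k(h) = d_k$ and $B_k(h) \ge (1-\varepsilon) C_k d_k$. Substituting this $h$ into \eqref{ineq.discr} and letting $\varepsilon \to 0$ gives $\|\{2^{k/r} C_k d_k\}\|_{\ell^r} \le C\, \|\{d_k\}\|_{\ell^p}$ for every $\{d_k\} \ge 0$; Proposition \ref{prop.2.1} (applied with $v_k = 1$ and $w_k = 2^{k/r} C_k$) then forces $\|\{2^{k/r} C_k\}\|_{\ell^{\rho}(\z)} \lesssim C$. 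Inserting the explicit formulas for $C_k$ produces $A_1$ and $A_2$ respectively. The main delicate point is precisely this necessity step: one needs Theorem \ref{thm.Copson} to supply near-extremisers that can be rescaled independently on each interval and assembled into a single admissible test function $h$, which is what makes the decoupled structure of \eqref{ineq.discr} usable.
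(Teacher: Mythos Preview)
Your proposal is correct and follows essentially the same route as the paper's proof: apply Theorem~\ref{thm.Copson} locally on each $[x_k,x_{k+1})$, use the discrete H\"older inequality \eqref{eq.1.1} for sufficiency, and for necessity assemble near-extremisers of the local Hardy inequalities into a single test function $h=\sum_k a_k\phi_k$ before invoking Proposition~\ref{prop.2.1}. The only cosmetic differences are that the paper fixes the near-extremiser factor at $1/2$ rather than $1-\varepsilon$ and writes out only case~(ii) in detail.
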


\begin{proof}
	We give the proof of the second case. The first one can be proved similarly, so its proof is omitted. 
	
	{\bf Necessity:}  Suppose that inequality \eqref{ineq.discr}	holds with constant $C$ independent of $h \in {\mathfrak M}^+(0,\infty)$. 
	
	Since, by Theorem \ref{thm.Copson}, (ii), for any $k \in \Z$
	\begin{align*}
	\sup \bigg\{ \bigg( \int_{x_k}^{x_{k+1}} \bigg( \int_{x_k}^x h \bigg)^q w(x)\,dx\bigg)^{\frac{1}{q}} :\,  \int_{x_k}^{x_{k+1}} h^p v = 1 \bigg\} & \\
	& \hspace{-5cm} = \bigg( \int_{x_k}^{x_{k+1}} \bigg( \int_x^{x_{k+1}} w \bigg)^{\frac{q}{p-q}} w(x) \big[ \sigma_p(x_k,x) \big]^{\frac{pq}{p-q}} \,dx \bigg)^{\frac{p-q}{pq}},
	\end{align*}
	then there exists $h_k \in \mp^+(0,\infty)$ such that $\supp h_k \in (x_k,x_{k+1})$, $\int_{x_k}^{x_{k+1}} h_k^p v = 1$ and
	$$
	\bigg(	\int_{x_k}^{x_{k+1}} \bigg( \int_{x_k}^t h_k \bigg)^q
	w(t)\,dt\bigg)^{\frac{1}{q}} \ge \frac{1}{2} \bigg( \int_{x_k}^{x_{k+1}} \bigg( \int_x^{x_{k+1}} w \bigg)^{\frac{q}{p-q}} w(x) \big[ \sigma_p(x_k,x) \big]^{\frac{pq}{p-q}} \,dx \bigg)^{\frac{p-q}{pq}}.
	$$
	
	Define
	\begin{equation}\label{g}
	h = \sum_{m \in \Z} a_m h_m,
	\end{equation}
	where $\{a_k\}_{k \in \Z}$ is any sequence of nonnegative numbers.

	Since $h \in {\mathfrak M}^+(0,\infty)$, then inequality \eqref{ineq.discr} holds for function \eqref{g} as well. Thus the inequality
	\begin{equation*}
	\bigg\| \bigg\{ a_k 2^{\frac{k}{r}} \bigg( \int_{x_k}^{x_{k+1}} \bigg( \int_x^{x_{k+1}} w \bigg)^{\frac{q}{p-q}} w(x) \big[ \sigma_p(x_k,x) \big]^{\frac{pq}{p-q}} \,dx \bigg)^{\frac{p-q}{pq}} \bigg\} \bigg\|_{\ell^r (\z)} \le C \, \|\{a_k\}\|_{\ell^p(\z)}
	\end{equation*}
	holds for all sequences $\{a_k\}_{k \in \Z}$ of nonnegative numbers. 
	
	By Proposition \ref{prop.2.1}, we arrive at
	$$
	A_2 = \bigg\| \bigg\{ 2^{\frac{k}{r}} \bigg( \int_{x_k}^{x_{k+1}} \bigg( \int_x^{x_{k+1}} w \bigg)^{\frac{q}{p-q}} w(x) \big[ \sigma_p(x_k,x) \big]^{\frac{pq}{p-q}} \,dx \bigg)^{\frac{p-q}{pq}} \bigg\} \bigg\|_{\ell^{\rho} (\z)}
	\le C.
	$$
	
	{\bf Sufficiency:} Assume that $A_2 < \infty$.	By Theorem \ref{thm.Copson}, (ii), applying inequality \eqref{eq.1.1}, we have that
	\begin{align*}
	\bigg\| \bigg\{ 2^{\frac{k}{r}} \bigg( \int_{x_k}^{x_{k+1}} \bigg( \int_s^{x_{k+1}} h \bigg)^q
	w(s)\,ds\bigg)^{\frac{1}{q}} \bigg\} \bigg\|_{\ell^r(\z)} & \\
	& \hspace{-5cm} \le \bigg\| \bigg\{ 2^{\frac{k}{r}} \bigg( \int_{x_k}^{x_{k+1}} \bigg( \int_x^{x_{k+1}} w \bigg)^{\frac{q}{p-q}} w(x) \big[ \sigma_p(x_k,x) \big]^{\frac{pq}{p-q}} \,dx \bigg)^{\frac{p-q}{pq}} \bigg( \int_{x_k}^{x_{k+1}} h^p v\bigg)^{\frac{1}{p}} \bigg\} \bigg\|_{\ell^r(\z)} \\
	& \hspace{-5cm} = A_2 \, \bigg\| \bigg\{ \bigg( \int_{x_n}^{x_{n+1}} h^p v \bigg)^{\frac{1}{p}} \bigg\} \bigg\|_{\ell^p (\z)}.
	\end{align*} 
	
	Thus, inequality \eqref{ineq.discr} holds, and, if $C$ is the best constant in \eqref{ineq.discr}, then 		
	$$
	C \le A_2.
	$$	
	
	The proof is completed.
\end{proof}


\section{Continuous sufficient conditions for inequality \eqref{ineq.discr}}\label{s.6}

\begin{lem}\label{lem.1.8}
	Let $1 \le p < \infty$, $0 < q,\, r < \infty$ and  $u,\,v,\,w \in {\mathcal W}(0,\infty)$. 
	
	{\rm (i)} Let $p \le \min\{q,\,r\}$. If
	$$
	B_1 : = \sup_{t \in (0,\infty)} \bigg( \int_0^t u \bigg)^{\frac{1}{r}} \bigg( \int_t^{\infty} w \bigg)^{\frac{1}{q}} \sigma_p(0,t) < \infty,
	$$
	then inequality \eqref{ineq.discr}	holds with constant independent of $h \in {\mathfrak M}^+(0,\infty)$. Moreover, if $C$ is the best constant in \eqref{ineq.discr}, then $C \lesssim B_1$.	
	
	{\rm (ii)} Let $r < p \le q$. If
	$$
	B_2 : = \bigg( \int_0^{\infty} \bigg( \int_0^t u \bigg)^{\frac{r}{p-r}} u(t) \bigg( \esup_{x \in (t,\infty)} \bigg( \int_x^{\infty} w \bigg)^{\frac{1}{q}} \sigma_p(0,x) \bigg)^{\frac{pr}{p-r}} \,dt \bigg)^{\frac{p-r}{pr}} < \infty,
	$$
	then inequality \eqref{ineq.discr}	holds with constant independent of $h \in {\mathfrak M}^+(0,\infty)$. Moreover, if $C$ is the best constant in \eqref{ineq.discr}, then $C \lesssim B_2$.	
	
	{\rm (iii)} Let $q < p \le r$. If
	$$
	B_3 : = \sup_{t \in (0,\infty)} \bigg( \int_0^t u \bigg)^{\frac{1}{r}} \bigg( \int_t^{\infty} \bigg( \int_x^{\infty} w \bigg)^{\frac{q}{p-q}} w(x) \big[ \sigma_p(0,x) \big]^{\frac{pq}{p-q}} \,dx\bigg)^{\frac{p-q}{pq}} < \infty,
	$$
	then inequality \eqref{ineq.discr}	holds with constant independent of $h \in {\mathfrak M}^+(0,\infty)$. Moreover, if $C$ is the best constant in \eqref{ineq.discr}, then $C \lesssim B_3$.	
	
	{\rm (iv)} Let $\max\{q,\,r\} < p$. If
	$$
	B_4 : = \bigg( \int_0^{\infty} \bigg( \int_0^t u \bigg)^{\frac{r}{p-r}} u(t) \bigg( \int_t^{\infty} \bigg( \int_x^{\infty} w \bigg)^{\frac{q}{p-q}} w(x) \big[ \sigma_p(0,x) \big]^{\frac{pq}{p-q}} \,dx\bigg)^{\frac{r(p-q)}{q(p-r)}}\,dt \bigg)^{\frac{p-r}{pr}} < \infty,
	$$
	then inequality \eqref{ineq.discr}	holds with constant independent of $h \in {\mathfrak M}^+(0,\infty)$. Moreover, if $C$ is the best constant in \eqref{ineq.discr}, then $C \lesssim B_4$.	
\end{lem}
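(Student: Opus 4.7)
The plan is to reduce each of the four sufficient conditions to the corresponding discrete characterization from Lemma \ref{lem.1.6}. In cases (i) and (ii) one has $p \le q$, so Lemma \ref{lem.1.6}(i) applies and it suffices to prove $A_1 \lesssim B_1$ and $A_1 \lesssim B_2$; in cases (iii) and (iv) one has $q < p$, so Lemma \ref{lem.1.6}(ii) reduces the problem to $A_2 \lesssim B_3$ and $A_2 \lesssim B_4$. The outer index of these discrete norms is $\rho = \infty$ when $p \le r$ (cases (i) and (iii)) and $\rho = pr/(p-r)$ when $r < p$ (cases (ii) and (iv)).

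The common first step in every case is to enlarge the interval-localized quantities appearing in $A_1$ and $A_2$ to their global tail counterparts, using the elementary monotone bounds
\begin{equation*}
\int_x^{x_{k+1}} w \le \int_x^{\infty} w, \qquad \sigma_p(x_k,x) \le \sigma_p(0,x), \qquad \int_{x_k}^{x_{k+1}} \le \int_{x_k}^{\infty},
\end{equation*}
together with the covering-sequence identity $2^{k/r} \approx \bigl(\int_0^{x_k} u\bigr)^{1/r}$ from Definition \ref{rem.disc.}. In cases (i) and (iii), where $\rho = \infty$, this is followed by the further estimate $\bigl(\int_0^{x_k} u\bigr)^{1/r} \le \bigl(\int_0^x u\bigr)^{1/r}$ for $x \ge x_k$ and then passing from the supremum over $k \in \z$ to the supremum over $x \in (0,\infty)$, which immediately yields $A_1 \le B_1$ and $A_2 \le B_3$.

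Cases (ii) and (iv), where $\rho = pr/(p-r)<\infty$, are technically more delicate and constitute the main obstacle of the proof. After the same enlargement, one arrives at
\begin{equation*}
A_1^{\rho} \lesssim \sum_{k \in \z} 2^{k\rho/r}\, \Psi^*(x_k)^{\rho}, \qquad A_2^{\rho} \lesssim \sum_{k \in \z} 2^{k\rho/r}\, K(x_k)^{r(p-q)/(q(p-r))},
\end{equation*}
where $\Psi^*(t) := \esup_{x \in (t,\infty)} \bigl(\int_x^{\infty} w\bigr)^{1/q}\sigma_p(0,x)$ and $K(t) := \int_t^{\infty}\bigl(\int_x^{\infty} w\bigr)^{q/(p-q)} w(x)[\sigma_p(0,x)]^{pq/(p-q)}\,dx$ are both non-increasing functions of $t$. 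Combining the identity $\rho/r = p/(p-r)$ with the remark after Definition \ref{rem.disc.}, namely $2^{kp/(p-r)} \approx \int_{x_{k-1}}^{x_k}\bigl(\int_0^t u\bigr)^{r/(p-r)} u(t)\,dt$, one rewrites each factor $2^{k\rho/r}$ as an integral over $(x_{k-1},x_k)$. Since $\Psi^*(x_k) \le \Psi^*(t)$ and $K(x_k) \le K(t)$ for $t \in (x_{k-1},x_k)$ by monotonicity, the tail factors can be absorbed inside the integrand, and summing over $k \in \z$ recombines these pieces into $\int_0^{\infty}$, giving precisely $B_2^{\rho}$ and $B_4^{\rho}$. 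The key technical point is the lower-index shift: the $u$-integral must be taken over the preceding interval $(x_{k-1},x_k)$ rather than over $(x_k,x_{k+1})$, so that monotonicity of $\Psi^*$ and $K$ acts in the correct direction.
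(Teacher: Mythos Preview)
Your proof is correct and follows the paper's route almost verbatim: enlarge the interval-localized quantities in $A_1$, $A_2$ to their global-tail versions, identify $2^{k/r}$ with the $u$-mass, and in cases (ii) and (iv) invoke the index-shifted equivalence $2^{kp/(p-r)} \approx \int_{x_{k-1}}^{x_k}\bigl(\int_0^t u\bigr)^{r/(p-r)}u(t)\,dt$ together with monotonicity of the tail functional to reassemble the continuous integral. The only cosmetic difference is in case (i), where the paper collapses the inner essential supremum via the identity $\esup_t F(t)G(t) = \esup_t F(t)\,\esup_{\tau>t}G(\tau)$ for non-decreasing $F$, whereas you instead push $\bigl(\int_0^{x_k} u\bigr)^{1/r}$ inside the essup using $\bigl(\int_0^{x_k} u\bigr)^{1/r} \le \bigl(\int_0^{x} u\bigr)^{1/r}$ for $x \ge x_k$; both arguments yield $A_1 \lesssim B_1$.
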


\begin{proof}
	{\rm (i)} Let $p \le \min\{q,\,r\}$. Assume that $B_1 < \infty$. 

	Recall that, if $F$ is a non-negative non-decreasing function on $(0,\infty)$, then
	\begin{equation}\label{Fubini.2}	
	\esup_{t \in (0,\infty)} F(t)G(t) = \esup_{t \in (0,\infty)} F(t)
	\esup_{\tau \in (t,\infty)} G(\tau)
	\end{equation}
	(see, for instance, \cite{gp2}).
		
	On using \eqref{Fubini.2}, we get that
	\begin{align*}
	A_1 & = \sup_{k \in \z} 2^{\frac{k}{r}} \bigg( \esup_{x \in (x_k,x_{k+1})} \bigg( \int_x^{x_{k+1}} w \bigg)^{\frac{1}{q}} \sigma_p(x_k,x) \bigg) \\
	& \lesssim \sup_{k \in \z} \bigg( \int_0^{x_k} u \bigg)^{\frac{1}{r}} \bigg( \esup_{x \in (x_k,\infty)} \bigg( \int_x^{\infty} w \bigg)^{\frac{1}{q}} \sigma_p(0,x) \bigg) \\
	& \le \sup_{t \in (0,\infty)} \bigg( \int_0^t u \bigg)^{\frac{1}{r}} \bigg( \esup_{x \in (t,\infty)} \bigg( \int_x^{\infty} w \bigg)^{\frac{1}{q}} \sigma_p(0,x) \bigg) \\
	& = \sup_{t \in (0,\infty)} \bigg( \int_0^t u \bigg)^{\frac{1}{r}} \bigg( \int_t^{\infty} w \bigg)^{\frac{1}{q}} \sigma_p(0,t) = B_1.
	\end{align*}
	Thus $A_1 < \infty$, and the statement follows from Lemma \ref{lem.1.6}, (i).
	
	{\rm (ii)} Let $r < p \le q$. Assume that $B_2 < \infty$. Clearly,
	\begin{align*}
	A_1 & = \bigg( \sum_{k \in \z} 2^{k\frac{p}{p-r}} \bigg( \esup_{x \in (x_k,x_{k+1})} \bigg( \int_x^{x_{k+1}} w \bigg)^{\frac{1}{q}} \sigma_p(x_k,x) \bigg)^{\frac{pr}{p-r}} \bigg)^{\frac{p-r}{pr}} \\
	& \lesssim \bigg( \sum_{k \in \z} \int_{x_{k-1}}^{x_k} \bigg( \int_0^t u \bigg)^{\frac{r}{p-r}} u(t)\,dt \cdot \bigg( \esup_{x \in (x_k,\infty)} \bigg( \int_x^{\infty} w \bigg)^{\frac{1}{q}} \sigma_p(0,x) \bigg)^{\frac{pr}{p-r}} \bigg)^{\frac{p-r}{pr}} \\
	& \le \bigg( \sum_{k \in \z} \int_{x_{k-1}}^{x_k} \bigg( \int_0^t u \bigg)^{\frac{r}{p-r}} u(t) \bigg( \esup_{x \in (t,\infty)} \bigg( \int_x^{\infty} w \bigg)^{\frac{1}{q}} \sigma_p(0,x) \bigg)^{\frac{pr}{p-r}} \,dt \bigg)^{\frac{p-r}{pr}} \\
	& \le \bigg( \int_0^{\infty} \bigg( \int_0^t u \bigg)^{\frac{r}{p-r}} u(t) \bigg( \esup_{x \in (t,\infty)} \bigg( \int_x^{\infty} w \bigg)^{\frac{1}{q}} \sigma_p(0,x) \bigg)^{\frac{pr}{p-r}} \,dt \bigg)^{\frac{p-r}{pr}} = B_2.
	\end{align*}
	Thus $A_1 < \infty$, and the statement follows from Lemma \ref{lem.1.6}, (i).
	
	{\rm (iii)} Let $q < p \le r$. Assume that $B_3 < \infty$. We have that
    \begin{align*}
	A_2 & = \sup_{k \in \z} 2^{\frac{k}{r}} \bigg( \int_{x_k}^{x_{k+1}} \bigg( \int_x^{x_{k+1}} w \bigg)^{\frac{q}{p-q}} w(x) \big[ \sigma_p(x_k,x) \big]^{\frac{pq}{p-q}} \,dx\bigg)^{\frac{p-q}{pq}}  \\
	& \lesssim \sup_{k \in \z} \bigg( \int_0^{x_k} u \bigg)^{\frac{1}{r}} \bigg( \int_{x_k}^{\infty} \bigg( \int_x^{\infty} w \bigg)^{\frac{q}{p-q}} w(x) \big[ \sigma_p(0,x) \big]^{\frac{pq}{p-q}} \,dx\bigg)^{\frac{p-q}{pq}}  \\
	& \le \sup_{t \in (0,\infty)} \bigg( \int_0^t u \bigg)^{\frac{1}{r}} \bigg( \int_t^{\infty} \bigg( \int_x^{\infty} w \bigg)^{\frac{q}{p-q}} w(x) \big[ \sigma_p(0,x) \big]^{\frac{pq}{p-q}} \,dx\bigg)^{\frac{p-q}{pq}} = B_3.
	\end{align*}
	Thus $A_2 < \infty$, and the statement follows from Lemma \ref{lem.1.6}, (ii).	
	
	{\rm (iv)} Let $\max\{q,\,r\} < p$. Assume that $B_4 < \infty$. We have that
	\begin{align*}
	A_2 & = \bigg( \sum_{k \in \z} 2^{k\frac{p}{p-r}} \bigg( \int_{x_k}^{x_{k+1}} \bigg( \int_x^{x_{k+1}} w \bigg)^{\frac{q}{p-q}} w(x) \big[ \sigma_p(x_k,x) \big]^{\frac{pq}{p-q}} \,dx\bigg)^{\frac{r(p-q)}{q(p-r)}} \bigg)^{\frac{p-r}{pr}} \\
	& \lesssim \bigg( \sum_{k \in \z} \int_{x_{k-1}}^{x_k} \bigg( \int_0^t u \bigg)^{\frac{r}{p-r}} u(t)\,dt  \cdot \bigg( \int_{x_k}^{\infty} \bigg( \int_x^{\infty} w \bigg)^{\frac{q}{p-q}} w(x) \big[ \sigma_p(0,x) \big]^{\frac{pq}{p-q}} \,dx \bigg)^{\frac{r(p-q)}{q(p-r)}} \bigg)^{\frac{p-r}{pr}} \\
	& \le \bigg( \sum_{k \in \z} \int_{x_{k-1}}^{x_k} \bigg( \int_0^t u \bigg)^{\frac{r}{p-r}} u(t) \bigg( \int_t^{\infty} \bigg( \int_x^{\infty} w \bigg)^{\frac{q}{p-q}} w(x) \big[ \sigma_p(0,x) \big]^{\frac{pq}{p-q}} \,dx\bigg)^{\frac{r(p-q)}{q(p-r)}}\,dt \bigg)^{\frac{p-r}{pr}} \\
	& \le \bigg( \int_0^{\infty} \bigg( \int_0^t u \bigg)^{\frac{r}{p-r}} u(t) \bigg( \int_t^{\infty} \bigg( \int_x^{\infty} w \bigg)^{\frac{q}{p-q}} w(x) \big[ \sigma_p(0,x) \big]^{\frac{pq}{p-q}} \,dx\bigg)^{\frac{r(p-q)}{q(p-r)}}\,dt \bigg)^{\frac{p-r}{pr}} = B_4. 
	\end{align*}
	Thus $A_2 < \infty$, and the statement follows from Lemma \ref{lem.1.6}, (ii).
	
	The proof is completed.
\end{proof}	


\section{Necessity of conditions $B_i$, $i=\overline{1,4}$ for inequality \eqref{main}}\label{s.7}

In this section, we show that conditions obtained in previous section are necessary for inequality \eqref{main}.		
\begin{lem}\label{lem.1.9}
	Let $1 \le p < \infty$, $0 < q,\, r < \infty$ and  $u,\,v,\,w \in {\mathcal W}(0,\infty)$. Assume that inequality \eqref{main} holds.
	
	{\rm (i)} If $p \le \min\{q,\,r\}$, then $B_1 < \infty$. Moreover, if $C$ is the best constant in \eqref{main}, then $B_1 \lesssim C$.
	
	{\rm (ii)} If $r < p \le q$, then $B_2 < \infty$. Moreover, if $C$ is the best constant in \eqref{main}, then $B_2 \lesssim C$.
	
	{\rm (iii)} If $q < p \le r$, then $B_3 < \infty$. Moreover, if $C$ is the best constant in \eqref{main}, then $B_3 \lesssim C$.
	
	{\rm (iv)} If $\max\{q,\,r\} < p$, then $B_4 < \infty$. Moreover, if $C$ is the best constant in \eqref{main}, then $B_4 \lesssim C$.
\end{lem}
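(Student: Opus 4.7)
The plan is to exploit the reduction Theorem \ref{thm.1.5}: validity of \eqref{main} forces validity of both \eqref{ineq.discr} and \eqref{ineq.supr}. Necessity of the conditions $B_i$ will then be read off from these two subordinate inequalities, together with direct test-function arguments applied to \eqref{main} itself.

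For cases \textbf{(i)} and \textbf{(ii)}, I would begin by noting the formal coincidences $B_1 = D_1$ and $B_2 = D_3$, with $D_1, D_3$ as in Theorem \ref{thm44b+1}. Case (i) has $p \le r$, so Theorem \ref{thm44b+1}(a) applied to the (necessarily valid) inequality \eqref{ineq.supr} gives $B_1 = D_1 \lesssim C$; case (ii) has $r < p$, so Theorem \ref{thm44b+1}(b) gives $B_2 = D_3 \lesssim C$. Thus both cases reduce to a direct matching of expressions.

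For case \textbf{(iii)} $q < p \le r$, the supremum inequality \eqref{ineq.supr} no longer produces a matching condition, so I would test \eqref{main} directly. Fix $t_0 \in (0,\infty)$ and set $J(t_0) := \int_{t_0}^{\infty} (\int_x^{\infty} w)^{q/(p-q)} w(x) [\sigma_p(0,x)]^{pq/(p-q)}\,dx$. Applying Theorem \ref{thm.Copson}(ii) with the weight $w\chi_{(t_0,\infty)}$ yields
\[
\sup_{h \in \mp^+(0,\infty)} \frac{(\int_{t_0}^{\infty} (\int_0^s h)^q w(s)\,ds)^{1/q}}{(\int_0^{\infty} h^p v)^{1/p}} \approx J(t_0)^{(p-q)/(pq)},
\]
so one may pick $h$ with $(\int_0^\infty h^p v)^{1/p}=1$ realizing this supremum up to a universal constant. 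Plugging this $h$ into \eqref{main} and restricting the outer integral to $(0,t_0)$, where $\int_x^{\infty} \ge \int_{t_0}^{\infty}$, bounds the LHS of \eqref{main} from below by $(\int_0^{t_0} u)^{1/r}\,(\int_{t_0}^{\infty} (\int_0^s h)^q w(s)\,ds)^{1/q}$. Taking the supremum over $t_0 \in (0,\infty)$ yields $B_3 \lesssim C$.

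Case \textbf{(iv)} $\max\{q,r\} < p$ is the main obstacle. The pointwise estimate from the case-(iii) argument (still valid whenever $q<p$) gives only $(\int_0^t u)^{1/r}\,J(t)^{(p-q)/(pq)} \lesssim C$, which integrated against $u(t)\,dt/\int_0^t u$ diverges logarithmically and so cannot yield $B_4$. I would therefore pass to a superposition. With the covering sequence $\{x_k\}$ of Definition \ref{rem.disc.}, construct nearly extremal test functions $h_k$ for the Copson inequality on $(x_k,\infty)$, so that each $h_k$ feels the full tail $\int_{x_k}^{\infty} w$ rather than only the block $\int_{x_k}^{x_{k+1}} w$, and form $h = \sum_k a_k h_k$. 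Substituting into \eqref{main} and using the decomposition of Theorem \ref{thm.1.4}, one obtains a discrete Hardy-type inequality $\|\{c_k a_k\}\|_{\ell^r(\z)} \le C\|\{a_k\}\|_{\ell^p(\z)}$ in which $c_k$ encodes the full-tail Copson constant at scale $k$. Proposition \ref{prop.2.1} with $\rho = pr/(p-r)$ extracts $\|\{c_k\}\|_{\ell^{\rho}(\z)} \lesssim C$, and the standard covering-sequence equivalences ($\int_0^{x_k} u \approx 2^k$ and geometric summation) upgrade this to the continuous integral defining $B_4$, exactly reversing the sufficiency computation of Lemma \ref{lem.1.8}(iv). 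The principal technical difficulty is that the supports of the $h_k$'s now overlap, so one must control the cross-terms in $(\int_0^s h)^q$ by a dyadic rearrangement of partial sums, where the assumption $q<p$ is crucial.
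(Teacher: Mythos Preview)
Your treatment of (i)--(iii) is fine. In fact, for (ii) your observation that $B_2 = D_3$ (up to the harmless $\esup$/$\sup$ and open/closed endpoint difference) gives a one-line proof, whereas the paper discretizes $B_2$ and bounds it by $A_1 + D_3$ before invoking Lemma~\ref{lem.1.6} and Theorem~\ref{thm44b+1}; your route is shorter. For (iii) your direct test of \eqref{main} with a near-extremizer for the Copson inequality on $(t_0,\infty)$ is a legitimate alternative to the paper's discretize-and-split argument $B_3 \lesssim A_2 + D_1$.

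Case (iv), however, has a real gap. With your full-tail extremizers $h_k$ (chosen so that $\big(\int_{x_k}^\infty(\int_0^s h_k)^q w\big)^{1/q}$ is comparable to the Copson constant at scale $k$), the lower bound on the left of \eqref{main} does go through as you indicate. The problem is on the \emph{right}: since the supports of the $h_k$ overlap, the only general control on $\big(\int_0^\infty(\sum_k a_k h_k)^p v\big)^{1/p}$ is Minkowski's inequality, which yields $\|\{a_k\}\|_{\ell^1}$, not $\|\{a_k\}\|_{\ell^p}$. Feeding that into Proposition~\ref{prop.2.1} produces the wrong exponent and does not recover $B_4$. Your proposed fix (``dyadic rearrangement of partial sums, where $q<p$ is crucial'') addresses cross-terms on the left-hand side, but the obstruction lives in the $p$-norm on the right, where $q$ plays no role; no rearrangement of partial sums will turn an $\ell^1$ bound into an $\ell^p$ bound for $p>1$.

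The paper avoids this entirely. It never tests \eqref{main} with overlapping functions; instead it discretizes $B_4$ via Lemma~\ref{L:1.2} and then, for each block $[x_k,x_{k+1})$, uses an integration-by-parts estimate to split
\[
\bigg(\int_{x_k}^{x_{k+1}}\Big(\int_x^\infty w\Big)^{\frac{q}{p-q}} w(x)\,[\sigma_p(0,x)]^{\frac{pq}{p-q}}\,dx\bigg)^{\frac{p-q}{pq}}
\lesssim (\text{localized piece}) + \sup_{t\ge x_k}\Big(\int_t^\infty w\Big)^{\frac1q}\sigma_p(0,t),
\]
which globally gives $B_4 \lesssim A_2 + D_3$. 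Now $A_2 \lesssim C$ comes from Lemma~\ref{lem.1.6} (whose test functions $h_k$ are supported in disjoint intervals $(x_k,x_{k+1})$, so the $\ell^p$ bound is automatic), and $D_3 \lesssim C$ comes from Theorem~\ref{thm44b+1} applied to \eqref{ineq.supr}. The key idea you are missing is this algebraic splitting of the full-tail block quantity into a local part handled by disjoint-support tests and a supremum part handled by the supremal inequality.
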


\begin{proof}
Let $1 \le p < \infty$, $0 < q,\, r < \infty$. Assume that inequality \eqref{main} holds. Suppose that $\{x_k\}_{k \in \z}$ is a covering sequence.

{\rm (i)} Let $p \le \min\{q,\,r\}$.  Since $B_1 = D_1$, then the statement follows by Theorem \ref{thm.1.5} and Theorem \ref{thm44b+1}.

{\rm (ii)} Let $r < p \le q$. We get, by applying Lemma \ref{L:1.2}, that
\begin{align*}
B_2 \lesssim \bigg( \sum_{k \in \z} 2^{k\frac{p}{p-r}} \bigg[ \esup_{x \in [x_k,x_{k+1})} \bigg( \int_x^{\infty} w \bigg)^{\frac{1}{q}} \sigma_p(0,x) \bigg]^{\frac{pr}{p-r}} \bigg)^{\frac{p-r}{pr}}. 
\end{align*}

It is easy to see that for any $k \in \zstroke$ 
\begin{align*}
\esup_{x \in [x_k,x_{k+1})} \bigg( \int_x^{\infty} w \bigg)^{\frac{1}{q}} \sigma_p(0,x) \lesssim \esup_{x \in [x_k,x_{k+1})} \bigg( \int_x^{x_{k+1}} w \bigg)^{\frac{1}{q}} \sigma_p(x_k,x) + \sup_{\tau \in [x_k,\infty)} \bigg( \int_{\tau}^{\infty} w \bigg)^{\frac{1}{q}} \sigma_p(0,\tau) 
\end{align*}

Thus
\begin{align*}
B_2 \lesssim & \bigg( \sum_{k \in \z} 2^{k\frac{p}{p-r}} \bigg[ \esup_{x \in [x_k,x_{k+1})} \bigg( \int_x^{x_{k+1}} w \bigg)^{\frac{1}{q}} \sigma_p(x_k,x) \bigg]^{\frac{pr}{p-r}} \bigg)^{\frac{p-r}{pr}} \\
& + \bigg( \sum_{k \in \zstroke} 2^{k\frac{p}{p-r}} \bigg[ \sup_{\tau \in [x_k,\infty)} \bigg( \int_{\tau}^{\infty} w \bigg)^{\frac{1}{q}} \sigma_p(0,\tau) \bigg]^{\frac{pr}{p-r}} \bigg)^{\frac{p-r}{pr}} \\
= & A_1 + \bigg( \sum_{k \in \zstroke} 2^{k\frac{p}{p-r}} \bigg[ \sup_{\tau \in [x_k,\infty)} \bigg( \int_{\tau}^{\infty} w \bigg)^{\frac{1}{q}} \sigma_p(0,\tau) \bigg]^{\frac{pr}{p-r}} \bigg)^{\frac{p-r}{pr}}.
\end{align*}

Denote by
$$
E_1 : = \bigg( \sum_{k \in \zstroke} 2^{k\frac{p}{p-r}} \bigg[ \sup_{\tau \in [x_k,\infty)} \bigg( \int_{\tau}^{\infty} w \bigg)^{\frac{1}{q}} \sigma_p(0,\tau) \bigg]^{\frac{pr}{p-r}} \bigg)^{\frac{p-r}{pr}}.
$$

Clearly, 
\begin{align*}
E_1 & \approx \bigg( \sum_{k \in \zstroke} \bigg( \int_{x_{k-1}}^{x_k} \bigg( \int_{x_{k-1}}^t u \bigg)^{\frac{r}{p-r}} u(t)\,dt \bigg) \bigg[ \sup_{\tau \in [x_k,\infty)} \bigg( \int_{\tau}^{\infty} w \bigg)^{\frac{1}{q}} \sigma_p(0,\tau) \bigg]^{\frac{pr}{p-r}} \bigg)^{\frac{p-r}{pr}} \\
& \le \bigg( \sum_{k \in \z} \int_{x_{k-1}}^{x_k} \bigg[\sup_{\tau \in [x,\infty)} \bigg( \int_{\tau}^{\infty} w \bigg)^{\frac{1}{q}} \sigma_p(0,\tau) \bigg]^{\frac{pr}{p-r}} \bigg( \int_0^x u \bigg)^{\frac{r}{p-r}} u(x) \,dx \bigg)^{\frac{p-r}{pr}} \\
& \le \bigg( \int_0^{\infty} \bigg[\sup_{\tau \in [x,\infty)} \bigg( \int_{\tau}^{\infty} w \bigg)^{\frac{1}{q}} \sigma_p(0,\tau) \bigg]^{\frac{pr}{p-r}} \bigg( \int_0^x u \bigg)^{\frac{r}{p-r}} u(x) \,dx \bigg)^{\frac{p-r}{pr}} = D_3.
\end{align*}

Combining yields
$$
B_2 \lesssim A_1 + D_3,
$$
and the statement follows by Theorem \ref{thm.1.5}, Lemma \ref{lem.1.6} and Theorem \ref{thm44b+1}.

{\rm (iii)} Let $q < p \le r$. Applying Lemma \ref{L:1.2}, we get that
\begin{align*}
B_3 \lesssim \sup_{k \in \z} 2^{\frac{k}{r}} \bigg( \int_{x_k}^{x_{k+1}} \bigg( \int_x^{\infty} w \bigg)^{\frac{q}{p-q}} w(x) \big[ \sigma_p(0,x) \big]^{\frac{pq}{p-q}} \,dx\bigg)^{\frac{p-q}{pq}}.
\end{align*}

Integrating by parts, for any $k \in \zstroke$, it is easy to see that
\begin{align}
\bigg( \int_{x_k}^{x_{k+1}} \bigg( \int_x^{\infty} w \bigg)^{\frac{q}{p-q}} w(x) \big[ \sigma_p(0,x) \big]^{\frac{pq}{p-q}} \,dx\bigg)^{\frac{p-q}{pq}} & \notag \\
& \hspace{-5cm} \lesssim \bigg( \int_{x_k}^{x_{k+1}} \bigg( \int_x^{x_{k+1}} w \bigg)^{\frac{q}{p-q}} w(x) \big[ \sigma_p(x_k,x) \big]^{\frac{pq}{p-q}} \,dx \bigg)^{\frac{p-q}{pq}} + \sup_{t \in [x_k,\infty)} \bigg( \int_t^{\infty} w \bigg)^{\frac{1}{q}} \sigma_p(0,t). \label{eq.1.1.1.1}
\end{align}

Thus,
\begin{align*}
B_3 \lesssim & \sup_{k \in \z} 2^{\frac{k}{r}} \bigg( \int_{x_k}^{x_{k+1}} \bigg( \int_x^{x_{k+1}} w \bigg)^{\frac{q}{p-q}} w(x) \big[ \sigma_p(x_k,x) \big]^{\frac{pq}{p-q}} \,dx \bigg)^{\frac{p-q}{pq}} \\
& + \sup_{k \in \z} 2^{\frac{k}{r}} \sup_{t \in [x_k,\infty)} \bigg( \int_t^{\infty} w \bigg)^{\frac{1}{q}} \sigma_p(0,t) \\
= & A_2 + \sup_{k \in \z} 2^{\frac{k}{r}} \sup_{t \in [x_k,\infty)} \bigg( \int_t^{\infty} w \bigg)^{\frac{1}{q}} \sigma_p(0,t).
\end{align*}

Denote by
$$
E_2 : = \sup_{k \in \z} 2^{\frac{k}{r}} \sup_{t \in [x_k,\infty)} \bigg( \int_t^{\infty} w \bigg)^{\frac{1}{q}} \sigma_p(0,t).
$$

Since
\begin{align*}
E_2 & = \sup_{k \in \z} \bigg( \int_0^{x_k} u \bigg)^{\frac{1}{r}} \sup_{t \in [x_k,\infty)} \bigg( \int_t^{\infty} w \bigg)^{\frac{1}{q}} \sigma_p(0,t) \\
& \le \sup_{x \in (0,\infty)} \bigg( \int_0^x u \bigg)^{\frac{1}{r}} \sup_{t \in [x,\infty)} \bigg( \int_t^{\infty} w \bigg)^{\frac{1}{q}} \sigma_p(0,t) = D_1,
\end{align*}
we arrive at
$$
B_3 \lesssim A_2 + D_1
$$
and the statement follows by Theorem \ref{thm.1.5}, Lemma \ref{lem.1.6} and Theorem \ref{thm44b+1}.

{\rm (iv)} Let $\max\{q,\,r\} < p$. Applying Lemma \ref{L:1.2}, we get that
\begin{align*}
B_4 \lesssim \bigg( \sum_{k \in \z} 2^{k\frac{p}{p-r}} \bigg( \int_{x_k}^{x_{k+1}} \bigg( \int_x^{\infty} w \bigg)^{\frac{q}{p-q}} w(x) \big[ \sigma_p(0,x) \big]^{\frac{pq}{p-q}} \,dx \bigg)^{\frac{r(p-q)}{q(p-r)}} \bigg)^{\frac{p-r}{pr}}.
\end{align*}

On using \eqref{eq.1.1.1.1}, we have that
\begin{align*}
B_4 \lesssim & \bigg( \sum_{k \in \z} 2^{k\frac{p}{p-r}} \bigg( \int_{x_k}^{x_{k+1}} \bigg( \int_x^{x_{k+1}} w \bigg)^{\frac{q}{p-q}} w(x) \big[ \sigma_p(x_k,x) \big]^{\frac{pq}{p-q}} \,dx \bigg)^{\frac{r(p-q)}{q(p-r)}} \bigg)^{\frac{p-r}{pr}} \\
& + \bigg( \sum_{k \in \z} 2^{k\frac{p}{p-r}} \bigg[ \sup_{t \in [x_k,\infty)} \bigg( \int_t^{\infty} w \bigg)^{\frac{1}{q}} \sigma_p(0,t) \bigg]^{\frac{pr}{p-r}}\bigg)^{\frac{p-r}{pr}} =  A_2 + E_1 \lesssim A_2 + D_3, 
\end{align*}
and the statement follows by Theorem \ref{thm.1.5}, Lemma \ref{lem.1.6} and Theorem \ref{thm44b+1}.

The proof is completed.

\end{proof}


\section{Proof of the main statement.}\label{s.8}

We are now in position to prove our main result.

\noindent{\bf Proof of Theorem \ref{main1}.} 

Note that $F_1 = D_1$, $F_2 = D_2$, $F_3 = D_3$ and $F_4 = D_4$.

Obviously, for any $t \in (0,\infty)$
\begin{align}
\bigg( \int_{t}^{\infty} w \bigg)^{\frac{1}{q}} \sigma_p(0,t) & \approx \bigg( \int_t^{\infty} \bigg( \int_y^{\infty} w \bigg)^{\frac{q}{p-q}} w(y)\,dy \bigg)^{\frac{p-q}{pq}} \sigma_p(0,t) \notag \\
& \le \bigg( \int_t^{\infty} \bigg( \int_y^{\infty} w \bigg)^{\frac{q}{p-q}} w(y) \big[\sigma_p(0,y)\big]^{\frac{pq}{p-q}} \,dy \bigg)^{\frac{p-q}{pq}}. \label{eq.cond.control}
\end{align}

Thus
\begin{align*}
F_1 = B_1 & = \sup_{x \in (0,\infty)} \bigg( \int_0^x u \bigg)^{\frac{1}{r}} \bigg( \int_x^{\infty} w \bigg)^{\frac{1}{q}} \sigma_p(0,x) \\
& \le \sup_{x \in (0,\infty)} \bigg( \int_0^x u \bigg)^{\frac{1}{r}} \bigg( \int_x^{\infty} \bigg( \int_y^{\infty} w \bigg)^{\frac{q}{p-q}} w(y) \big[\sigma_p(0,y)\big]^{\frac{pq}{p-q}} \,dy \bigg)^{\frac{p-q}{pq}} = B_3 = F_5.
\end{align*}

By inequality \eqref{eq.cond.control}, we get that
\begin{align*}
\sup_{t \in [x,\infty)} \bigg( \int_t^{\infty} w \bigg)^{\frac{1}{q}} \sigma_p(0,t) & \le \sup_{t \in [x,\infty)} \bigg( \int_t^{\infty} \bigg( \int_y^{\infty} w \bigg)^{\frac{q}{p-q}} w(y) \big[\sigma_p(0,y)\big]^{\frac{pq}{p-q}} \,dy \bigg)^{\frac{p-q}{pq}} \\
& = \bigg( \int_x^{\infty} \bigg( \int_y^{\infty} w \bigg)^{\frac{q}{p-q}} w(y) \big[\sigma_p(0,y)\big]^{\frac{pq}{p-q}} \,dy \bigg)^{\frac{p-q}{pq}}.
\end{align*}

Thus
\begin{align*}
F_3 = B_2 & = \bigg( \int_0^{\infty} \bigg[\sup_{t \in [x,\infty)} \bigg( \int_t^{\infty} w \bigg)^{\frac{1}{q}} \sigma_p(0,t) \bigg]^{\frac{pr}{p-r}} \bigg( \int_0^x u \bigg)^{\frac{r}{p-r}} u(x) \,dx \bigg)^{\frac{p-r}{pr}} \\
& \le \bigg( \int_0^{\infty} \bigg( \int_0^x u \bigg)^{\frac{r}{p-r}} u(x) \bigg( \int_x^{\infty} \bigg( \int_y^{\infty} w \bigg)^{\frac{q}{p-q}} w(y) \big[ \sigma_p(0,y) \big]^{\frac{pq}{p-q}} \,dy\bigg)^{\frac{r(p-q)}{q(p-r)}}\,dx \bigg)^{\frac{p-r}{pr}} = B_4 = F_6.
\end{align*}

So, the proof of the statement immediately follows from Theorem \ref{thm.1.5} and \ref{thm44b+1}, Lemmas \ref{lem.1.8} and \ref{lem.1.9}. \qed



\begin{bibdiv}
    \begin{biblist}
        
        \bib{BMU}{article}{
        	author={Bilgi\c{c}li, N.},
        	author={Mustafayev, R. Ch.},
        	author={\"{U}nver, T.},
        	title={Multidimensional bilinear Hardy inequalities},
        	journal={Azerb. J. Math.},
        	volume={10},
        	date={2020},
        	number={1},
        	pages={127--161},
        	issn={},
        	review={},
        }
        
        \bib{gop2009}{article}{
        	author={Evans, W. D.},
        	author={Gogatishvili, A.},
        	author={Opic, B.},
        	title={The $\rho$-quasiconcave functions and weighted inequalities},
        	conference={
        		title={Inequalities and applications},
        	},
        	book={
        		series={Internat. Ser. Numer. Math.},
        		volume={157},
        		publisher={Birkh\"auser},
        		place={Basel},
        	},
        	date={2009},
        	pages={121--132},
        	review={},
        }
    
        \bib{g1}{article}{
        	author={Gogatishvili, A.},
        	title={Discretization and anti-discretization of function spaces},
        	series={},
        	edition={},
        	journal={In the proceedings of the The Autumn Conference Mathematical Society
        		of Japan, September 25--28, Shimane University, Matsue (2002)},
        	pages={63--72},
        }
        
        \bib{gjop}{article}{
        	author={Gogatishvili, A.},
        	author={Johansson, M.},
        	author={Okpoti, C. A.},
        	author={Persson, L.-E.},
        	title={Characterisation of embeddings in Lorentz spaces},
        	journal={Bull. Austral. Math. Soc.},
        	volume={76},
        	date={2007},
        	number={1},
        	pages={69--92},
        	issn={0004-9727},
        	review={},
        	doi={},
        }
        
        \bib{GKPS}{article}{
        	author={Gogatishvili, A.},
        	author={K\v{r}epela, M.},
        	author={Pick, L.},
        	author={Soudsk\'{y}, F.},
        	title={Embeddings of Lorentz-type spaces involving weighted integral
        		means},
        	journal={J. Funct. Anal.},
        	volume={273},
        	date={2017},
        	number={9},
        	pages={2939--2980},
        	issn={0022-1236},
        	review={},
        	doi={},
        }	
        
        \bib{GMPTU}{article}{
        	author={Gogatishvili, A.},
        	author={Mihula, Z.},
        	author={Pick, L.},
        	author={Tur\v{c}inova, H.},
        	author={\"{U}nver, T.},
        	title={Weighted inequalities for a superposition of the Copson operator and the Hardy operator},
        	journal={Preprint.arXiv:2109.03095},
        	volume={},
        	date={2021},
        	number={},
        	pages={},
        	issn={},
        }
        
        \bib{GMP1}{article}{
        	author={Gogatishvili, A.},
        	author={Mustafayev, R. Ch.},
        	author={Persson, L.-E.},
        	title={Some new iterated Hardy-type inequalities},
        	journal={J. Funct. Spaces Appl.},
        	date={2012},
        	pages={Art. ID 734194, 30},
        	issn={0972-6802},
        	review={},
        	doi={},
        }
        
        \bib{GMP2}{article}{
        	author={Gogatishvili, A.},
        	author={Mustafayev, R. Ch.},
        	author={Persson, L.-E.},
        	title={Some new iterated Hardy-type inequalities: the case $\theta=1$},
        	journal={J. Inequal. Appl.},
        	date={2013},
        	pages={2013:515, 29},
        	review={},
        }

        \bib{GogMusIHI}{article}{
        	author={Gogatishvili, A.},
        	author={Mustafayev, R. Ch.},
        	title={Weighted iterated Hardy-type inequalities},
        	journal={Math. Inequal. Appl.},
            volume={20},
        	date={2017},            
            number={3},
        	pages={683--728},
        	issn={},
        	doi={},
        }
        
        \bib{gog.mus.2017_2}{article}{
        	author={Gogatishvili, A.},
        	author={Mustafayev, R. Ch.},
        	title={Iterated Hardy-type inequalities involving suprema},
        	journal={Math. Inequal. Appl.},
        	volume={20},
        	date={2017},
        	number={4},
        	pages={901--927},
        	issn={},
        	review={},
        }
        
        \bib{gmu_CMJ}{article}{
        	author={Gogatishvili, A.},
        	author={Mustafayev, R.},
        	author={\"Unver, T.},
        	title={Embeddings between weighted Copson and Ces\`aro function spaces},
        	journal={Czechoslovak Math. J.},
        	volume={67(142)},
        	date={2017},
        	number={4},
        	pages={1105--1132},
        	issn={0011-4642},
        	review={},
        	doi={},
        }
        
        \bib{gmu_2017}{article}{
        	author={Gogatishvili, A.},
        	author={Mustafayev, R. Ch.},
        	author={\"Unver, T.},
        	title={Embedding relations between weighted complementary local
        		Morrey-type spaces and weighted local Morrey-type spaces},
        	journal={Eurasian Math. J.},
        	volume={8},
        	date={2017},
        	number={1},
        	pages={34--49},
        	issn={2077-9879},
        	review={},
        }
                
        \bib{gogmusunv}{article}{
        	author = {Gogatishvili, A.}
        	author = {Mustafayev, R. Ch.}
        	author = {Unver, T.},
        	year = {2019},
        	month = {12},
        	pages = {1303-1328},
        	title = {Pointwise multipliers between weighted Copson and Cesàro function spaces},
        	volume = {69},
        	journal = {Mathematica Slovaca},
        	doi = {}
        }	
     
        \bib{gop}{article}{
        	author={Gogatishvili, A.},
        	author={Opic, B.},
        	author={Pick, L.},
        	title={Weighted inequalities for Hardy-type operators involving suprema},
        	journal={Collect. Math.},
        	volume={57},
        	date={2006},
        	number={3},
        	pages={227--255},
        	issn={0010-0757},
        	review={},
        }
    
        \bib{gp1}{article}{
        	author={Gogatishvili, A.},
        	author={Pick, L.},
        	title={Discretization and anti-discretization of rearrangement-invariant
        		norms},
        	journal={Publ. Mat.},
        	volume={47},
        	date={2003},
        	number={2},
        	pages={311--358},
        	issn={0214-1493},
        	review={},
        }
    
        \bib{gp2}{article}{
        	author={Gogatishvili, A.},
        	author={Pick, L.},
        	title={Embeddings and duality theorems for weak classical Lorentz spaces},
        	journal={Canad. Math. Bull.},
        	volume={49},
        	date={2006},
        	number={1},
        	pages={82--95},
        	issn={0008-4395},
        	review={},
        	doi={},
        }
        
        \bib{GU}{article}{
        	author={Gogatishvili, A.},
        	author={\"{U}nver, T.},
        	title={Weighted inequalities involving iteration of two Hardy operators},
        	journal={Preprint.arXiv:2201.11437},
        	volume={},
        	date={2022},
        	number={},
        	pages={},
        	issn={},
        }
    
        \bib{Krep}{article}{
        	author={K\v repela, M.},
        	title={Iterating bilinear Hardy inequalities},
        	journal={Proc. Edinb. Math. Soc. (2)},
        	volume={60},
        	date={2017},
        	number={4},
        	pages={955--971},
        	issn={0013-0915},
        	review={},
        	doi={},
        }
        
        \bib{krepick}{article}{
        	author={K\v{r}epela, M.},
        	author={Pick, L.},
        	title={Weighted inequalities for iterated Copson integral operators},
        	journal={Studia Math.},
        	volume={253},
        	date={2020},
        	number={2},
        	pages={163--197},
        	issn={0039-3223},
        	review={},
        	doi={},
        }
    
        \bib{Le1}{article}{
        	author={Leindler, L.},
        	title={Inequalities of Hardy-Littlewood type},
        	language={English, with Russian summary},
        	journal={Anal. Math.},
        	volume={2},
        	date={1976},
        	number={2},
        	pages={117--123},
        	issn={},
        	review={},
        }
        
        \bib{Le2}{article}{
        	author={Leindler, L. },
        	title={On the converses of inequalities of Hardy and Littlewood},
        	journal={Acta Sci. Math. (Szeged)},
        	volume={58},
        	date={1993},
        	number={1-4},
        	pages={191--196},
        	issn={},
        	review={},
        }
        
        \bib{mus.2017}{article}{
        	author={Mustafayev, R. Ch.},
        	title={On weighted iterated Hardy-type inequalities},
        	journal={Positivity},
        	volume={22},
        	date={2018},
        	number={},
        	pages={275--299},
        	issn={},
        	review={},
        }
        
        \bib{ok}{book}{
        	author={Opic, B.}, 
        	author={Kufner, A.}, 
        	title={Hardy-type inequalities}, 
        	series={Pitman Research Notes in Mathematics Series},
        	volume={219}, 
        	publisher={Longman Scientific \& Technical},
        	place={Harlow}, 
        	date={1990}, 
        	pages={xii+333}, 
        	isbn={},
        	review={}, 
        }
                
        \bib{ProkhStep1}{article}{
            author={Prokhorov, D. V.},
            author={Stepanov, V. D.},
            title={On weighted Hardy inequalities in mixed norms},
            journal={Proc. Steklov Inst. Math.},
            volume={283},
            date={2013},
            pages={149--164},
        }
        
        \bib{ss}{article}{
            author={Sinnamon, G.},
            author={Stepanov, V.D.},
            title={The weighted Hardy inequality: new proofs and the case $p=1$},
            journal={J. London Math. Soc. (2)},
            volume={54},
            date={1996},
            number={1},
            pages={89--101},
            issn={},
            review={},
            doi={},
        }

\end{biblist}
\end{bibdiv}

\end{document}